\newtheorem{thm}{Theorem} [section]
\newtheorem{cor}[thm]{Corollary}
\newtheorem{lem}[thm]{Lemma}
\newtheorem{prop}[thm]{Proposition}
\theoremstyle{definition}
\theoremstyle{remark}
\numberwithin{equation}{section}
\begin{document}
\title{Permutational behavior of reversed Dickson polynomials over finite fields}%
\author[Kaimin Cheng]
{Kaimin Cheng\\
Department of Mathematics, Sichuan University Jinjiang College,
Pengshan 620860, P.R. China}
\thanks{The research was supported partially by the General Project of Department
of Education of Sichuan Province \# 15ZB0434. \\
Email: ckm20@126.com.}

%\subjclass{Primary 11T22,11R18}%
\keywords{Permutation polynomial, Reversed Dickson polynomial
of the fourth kind, Finite field, Generating function}
\subjclass[2000]{Primary 11T06, 11T55, 11C08}
\date{\today}%
%\dedicatory{}%
%\commby{}%
% ----------------------------------------------------------------
\begin{abstract}
In this paper, we use the method developed previously by Hong,
Qin and Zhao to obtain several results on the permutational
behavior of the reversed Dickson polynomial $D_{n,k}(1,x)$
of the $(k+1)$-th kind over the finite field ${\mathbb F}_{q}$.
Particularly, we present the explicit evaluation of the
first moment $\sum_{a\in {\mathbb F}_{q}}D_{n,k}(1,a)$.
Our results extend the known results from the case
$0\le k\le 3$ to the general $k\ge 0$ case.
\end{abstract}

\maketitle

\section{Introduction}
Let ${\mathbb F}_{q}$ be the finite field of characteristic $p$ with $q$ elements.
Associated to any integer $n\ge 0$ and a parameter $a\in {\mathbb F}_{q}$,
the $n$-th \textit{Dickson polynomials of the first kind and of the second kind},
denoted by $D_n(x,a)$ and $E_n(x,a)$, are defined for $n\ge 1$ by
$$D_n(x,a):=\sum_{i=0}^{[\frac{n}{2} ]}\frac{n}{n-i}\binom{n-i}{i}(-a)^ix^{n-2i}$$
and
$$E_n(x,a):=\sum_{i=0}^{[\frac{n}{2} ]}\binom{n-i}{i}(-a)^ix^{n-2i},$$
respectively, and $D_0(x,a):=2, E_0(x,a):=1$, where $[\frac{n}{2}]$
means the largest integer no more than $\frac{n}{2}$. In 2012, Wang and Yucas
\cite{[WY]} further defined the \textit{$n$-th Dickson polynomial
of the $(k+1)$-th kind $D_{n,k}(x,a)\in {\mathbb F}_{q}[x]$} for $n\ge 1$ by
$$D_{n,k}(x,a):=\sum_{i=0}^{[\frac{n}{2} ]}\frac{n-ki}{n-i}\binom{n-i}{i}(-a)^ix^{n-2i}$$
and $D_{0,k}(x,a):=2-k$.

Hou, Mullen, Sellers and Yucas \cite{[HMSY]} introduced the definition of
\textit{the reversed Dickson polynomial of the first kind},
denoted by $D_n(a,x)$, as follows

$$D_n(a,x):=\sum_{i=0}^{[\frac{n}{2} ]}\frac{n}{n-i}\binom{n-i}{i}(-x)^ia^{n-2i}$$
if $n\ge 1$ and $D_0(a, x)=2$.
To extend the definition of reversed Dickson polynomials, Wang and Yucas \cite{[WY]}
defined \textit{the $n$-th reversed Dickson polynomial of
$(k+1)$-th kind} $D_{n,k}(a,x)\in {\mathbb F}_{q}[x]$, which is defined for $n\ge 1$ by
\begin{equation}\label{c0}
D_{n,k}(a,x):=\sum_{i=0}^{[\frac{n}{2} ]}\frac{n-ki}{n-i}\binom{n-i}{i}(-x)^ia^{n-2i}
\end{equation}
and $D_{0,k}(a,x)=2-k$.

It is well known that $D_n(x,0)$ is a permutation polynomial of ${\mathbb F}_{q}$
if and only if $\gcd(n,q-1)=1$, and if $a\ne 0$, then $D_n(x,a)$ induces a permutation
of ${\mathbb F}_{q}$ if and only if $\gcd(n,q^2-1)=1$. Besides, there are lots of
published results on permutational properties of Dickson polynomial $E_n(x,a)$
of the second kind (see, for example, \cite{[Coh]}). In \cite{[WY]},
Wang and Yucas investigated the permutational properties of Dickson polynomial
$D_{n,2}(x,1)$ of the third kind. They got some necessary
conditions for $D_{n,2}(x,1)$ to be a permutation polynomial of ${\mathbb F}_{q}$.

Hou, Mullen, Sellers and Yucas \cite{[HMSY]} considered the permutational
behavior of reversed Dickson polynomial $D_{n}(a,x)$ of the first kind.
Actually, they showed that $D_n(a,x)$ is closely related to almost perfect
nonlinear functions, and obtained some families of permutation polynomials
from the revered Dickson polynomials of the first kind. In \cite{[HL]},
Hou and Ly found several necessary conditions for the revered Dickson
Polynomials $D_n(1,x)$ of the first kind to be a permutation polynomial.
Recently, Hong, Qin and Zhao \cite{[HQZ]} studied the revered Dickson
polynomial $E_n(a,x)$ of the second kind that is defined for $n\ge 1$ by
$$E_{n}(a,x):=\sum_{i=0}^{[\frac{n}{2} ]}\binom{n-i}{i}(-x)^ia^{n-2i}$$
and $E_{0}(a,x)=1$. In fact, they gave some necessary conditions for
the revered Dickson polynomial $E_n(1,x)$ of the second kind to be
a permutation polynomial of ${\mathbb F}_{q}$. Regarding the revered Dickson
polynomial $D_{n, 2}(a,x)\in {\mathbb F}_{q}[x]$ of the third kind,
from its definition one can derive that
\begin{equation}\label{c1}
D_{n,2}(a,x)=E_{n-1}(a,x)
\end{equation}
for each $x\in {\mathbb F}_{q}$. Using (\ref{c1}), one can deduce
immediately from \cite{[HQZ]} the similar results on the permutational
behavior of the reversed Dickson polynomial $D_{n,2}(a,x)$ of the third kind.
On the other hand, by using the method presented by Hong, Qin and Zhao
in \cite{[HQZ]}, Cheng, Hong and Qin \cite{[CHQ]} obtained the results
on the permutational behavior of the reversed Dickson polynomial
$D_{n,3}(a,x)$ of the fourth kind.

In this paper, our main goal is to continue to use the method developed by
Hong, Qin and Zhao in \cite{[HQZ]} to investigate the reversed
Dickson polynomial $D_{n,k}(a,x)$ of the $(k+1)$-th kind which
is defined by (\ref{c0}) if $n\ge 1$ and $D_{0,k}(a,x):=2-k$.
For $a\ne 0$, we write $x=y(a-y)$ with an indeterminate $y\ne \frac{a}{2}$.
Then one can rewrite $D_{n,k}(a,x)$ as
\begin{equation}\label{c3}
D_{n,k}(a,x)=\frac{\big((k-1)a-(k-2)y\big)y^n-\big(a+(k-2)y\big)(a-y)^n}{2y-a}.
\end{equation}
We have
\begin{equation}\label{c3'}
D_{n, k}\Big(a, \frac{a^2}{4}\Big)=\frac{(kn-k+2)a^n}{2^n}.
\end{equation}
In fact, (\ref{c3}) and (\ref{c3'}) follow from Theorem 2.2 (i)
and Theorem 2.4 (i) below. It is easy to see that if ${\rm char}({\mathbb F}_{q})=2$,
then $D_{n,k}(a,x)=E_n(a,x)$ if $k$ is odd and $D_{n,k}(a,x)=D_n(a,x)$ if $k$ is even.
We also find that $D_{n,k}(a, x)=D_{n,k+p}(a, x)$, so we can restrict $p>k$.
Thus we always assume $p={\rm char}({\mathbb F}_{q})>3$ in what follows.

The paper is organized as follows. First in section 2, we study the
properties of the reversed Dickson polynomial $D_{n,k}(a,x)$ of the
fourth kind. Subsequently, in Section 3, we prove a necessary condition
for the reversed Dickson polynomial $D_{n,k}(1,x)$ of the $k+1$-th kind
to be a permutation polynomial of ${\mathbb F}_{q}$ and then
introduce an auxiliary polynomial to present a characterization
for $D_{n,k}(1,x)$ to be a permutation of ${\mathbb F}_q$.
From the Hermite criterion \cite{[LN]} one knows that a function
$f: {\mathbb F}_{q}\rightarrow {\mathbb F_{q}}$
is a permutation polynomial of ${\mathbb F_{q}}$
if and only if the $i$-th moment
\begin{align*}\nonumber
\sum_{a\in {\mathbb F_{q}}}f(a)^i=
{\left\{\begin{array}{rl}
0,& {\rm if} \ 0\le i\le q-2,\\
-1,& {\rm if} \ i=q-1.
\end{array}
\right.}
\end{align*}
Thus to understand well the permutational behavior of
the reversed Dickson polynomial $D_{n,k}(1,x)$
of the fourth kind, we would like to know if the $i$-th
moment $\sum_{a\in {\mathbb F}_q}D_{n,k}(1,a)^i$
is computable. We are able to treat with this sum when $i =1$.
The final section is devoted to the computation of the first moment
$\sum_{a\in {\mathbb F_{q}}}D_{n,k}(1,a)$.

\section{Reversed Dickson polynomials of the $k+1$-th kind}

In this section, we study the properties of the reversed Dickson polynomials
$D_{n,k+1}(a,x)$ of the fourth kind. Clearly, if $a=0$, then
\begin{align*}\nonumber
D_{n,k+1}(0,x)={\left\{
  \begin{array}{rl}
0,& {\rm if} \ n \ {\rm is \ odd},\\
(-1)^{\frac{n}{2}+1}(k-2)x^{\frac{n}{2}},& {\rm if} \ n \ {\rm is \ even}.
\end{array}
\right.}
\end{align*}
Therefore, $D_{n,k+1}(0,x)$ is a PP (permutation polynomial) of ${\mathbb F_{q}}$
if and only if $n$ is an even integer with $\gcd(\frac{n}{2},q-1)=1$.
In what follows, we always let $a\in {\mathbb F_{q}^*}$.
First, we give a basic fact as follows.
\begin{lem}
\cite{[LN]} Let $f(x)\in {\mathbb F_{q}}[x]$. Then $f(x)$
is a PP of ${\mathbb F_{q}}$ if and only if
$cf(dx)$ is a PP of ${\mathbb F_{q}}$ for any
given $c, d\in {\mathbb F_{q}^*}$.
\end{lem}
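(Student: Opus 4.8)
The statement is a standard elementary fact about permutation polynomials, and the natural approach is to argue directly from the definition that a polynomial $f$ is a PP of $\mathbb{F}_q$ precisely when the induced map $a \mapsto f(a)$ is a bijection of $\mathbb{F}_q$ onto itself. First I would fix $c, d \in \mathbb{F}_q^*$ and consider the map $g \colon \mathbb{F}_q \to \mathbb{F}_q$ defined by $g(a) = c f(da)$. The plan is to show $g$ is a bijection if and only if $f$ is, and for this it is cleanest to realize $g$ as a composition of three maps on $\mathbb{F}_q$: the multiplication-by-$d$ map $m_d \colon a \mapsto da$, then $f$, then the multiplication-by-$c$ map $m_c \colon b \mapsto cb$, so that $g = m_c \circ f \circ m_d$.

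The key observation is that $m_d$ and $m_c$ are themselves bijections of $\mathbb{F}_q$, since $c, d$ are nonzero and $\mathbb{F}_q$ is a field: their inverses are $m_{d^{-1}}$ and $m_{c^{-1}}$ respectively. I would then invoke the elementary fact that a composition of maps is a bijection if and only if the middle map is a bijection whenever the outer maps are bijections — concretely, if $f$ is a bijection then $g = m_c \circ f \circ m_d$ is a composition of bijections, hence a bijection; conversely, if $g$ is a bijection, then $f = m_{c^{-1}} \circ g \circ m_{d^{-1}}$ is again a composition of bijections, hence a bijection. Since "PP of $\mathbb{F}_q$" is by definition the same as "the induced self-map of $\mathbb{F}_q$ is a bijection", this completes the equivalence.

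There is essentially no obstacle here; the only point requiring a word of care is the distinction between a polynomial and the function it induces, since two different polynomials can induce the same map on $\mathbb{F}_q$ — but this causes no trouble because $c f(dx)$, as a polynomial, induces exactly the function $a \mapsto c f(da)$, and permutational behavior depends only on the induced function. Alternatively, and even more briefly, one can simply cite \cite{[LN]} for this statement, as the paper already does in the lemma's attribution; I would include the short composition-of-bijections argument above for completeness and then move on.
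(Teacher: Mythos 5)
Your argument is correct and is the standard one: the paper itself offers no proof of this lemma, merely citing Lidl--Niederreiter, and your composition-of-bijections argument (writing $g=m_c\circ f\circ m_d$ with $m_c,m_d$ invertible since $c,d\in\mathbb{F}_q^*$) is exactly the canonical justification. Your remark distinguishing the polynomial from its induced function is a nice touch but, as you note, causes no difficulty here.
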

Then we can deduce the following result.
\begin{thm}
Let $a, b\in {\mathbb F_{q}^*}$. Then the following are true.

{\rm (i)}. One has $D_{n,k}(a,x)=\frac{a^n}{b^n}D_{n,k}(b,\frac{b^2}{a^2}x)$.

{\rm (ii)}. We have that $D_{n,k}(a,x)$ is a PP of ${\mathbb F_{q}}$
if and only if $D_{n,k}(1,x)$ is a PP of ${\mathbb F_{q}}$.
\end{thm}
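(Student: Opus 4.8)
The plan is to prove part (i) first by a direct manipulation of the defining series \eqref{c0}, and then to derive part (ii) as a quick corollary using Lemma 2.1. For part (i), I would start from
\[
D_{n,k}(a,x)=\sum_{i=0}^{[n/2]}\frac{n-ki}{n-i}\binom{n-i}{i}(-x)^i a^{n-2i},
\]
and substitute $x\mapsto \frac{b^2}{a^2}x$ into $D_{n,k}(b,\cdot)$, giving
\[
D_{n,k}\Big(b,\frac{b^2}{a^2}x\Big)=\sum_{i=0}^{[n/2]}\frac{n-ki}{n-i}\binom{n-i}{i}\Big(-\frac{b^2}{a^2}x\Big)^i b^{n-2i}
=\sum_{i=0}^{[n/2]}\frac{n-ki}{n-i}\binom{n-i}{i}(-x)^i\,\frac{b^{2i}}{a^{2i}}\,b^{n-2i}.
\]
The key observation is that $\frac{b^{2i}}{a^{2i}}\,b^{n-2i}=\frac{b^n}{a^{2i}}$, so each term picks up a common factor $\frac{b^n}{a^n}$ relative to the corresponding term of $D_{n,k}(a,x)$, namely $\frac{b^{2i}}{a^{2i}}b^{n-2i}=\frac{b^n}{a^n}\cdot a^{n-2i}$. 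Pulling out $\frac{b^n}{a^n}$ then yields exactly $\frac{b^n}{a^n}D_{n,k}(a,x)$, which rearranges to the claimed identity. I would also note separately that the $n=0$ case holds trivially since both sides equal $2-k$ (so the convention $D_{0,k}(a,x)=2-k$ is consistent), and that $a,b\in\mathbb{F}_q^*$ is exactly what is needed for the divisions to make sense and for $a^{2i}$ to be invertible.

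For part (ii), I would apply Lemma 2.1 with the roles played by the constants $c=\frac{b^n}{a^n}\in\mathbb{F}_q^*$ and $d=\frac{b^2}{a^2}\in\mathbb{F}_q^*$. By part (i), $D_{n,k}(a,x)=c\,g(dx)$ where $g(x):=D_{n,k}(b,x)$, so $D_{n,k}(a,x)$ is a PP of $\mathbb{F}_q$ if and only if $D_{n,k}(b,x)$ is. Taking $b=1$ gives the stated equivalence between $D_{n,k}(a,x)$ and $D_{n,k}(1,x)$. One mild point to address is the case ${\rm char}(\mathbb{F}_q)=2$, where $a^2=b^2$ forces $d$ to behave differently, or more precisely the substitution $x\mapsto(b/a)^2 x$ is still a bijection of $\mathbb{F}_q$ since $b/a\ne 0$; but in fact the running assumption $p>3$ in the paper renders this moot.

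I do not expect any serious obstacle: the whole statement is essentially a homogeneity/scaling property of the series, and the only thing to be careful about is tracking the exponents of $a$ and $b$ in the generic term and confirming that the ratio is independent of the summation index $i$. The ``hard part,'' if any, is purely bookkeeping — verifying that $(-b^2x/a^2)^i b^{n-2i} = (b^n/a^n)(-x)^i a^{n-2i}$ term by term — after which both (i) and (ii) fall out immediately, the latter being a one-line invocation of Lemma 2.1.
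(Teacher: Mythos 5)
Your proposal is correct and follows essentially the same route as the paper: a term-by-term comparison of the defining sum showing $(-b^2x/a^2)^i b^{n-2i}=(b^n/a^n)(-x)^i a^{n-2i}$, followed by setting $b=1$ and invoking Lemma 2.1 (the constant you call $c$ should be $a^n/b^n$ rather than $b^n/a^n$, but this is immaterial since only $c\in\mathbb{F}_q^*$ is needed). No gaps.
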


\begin{proof}
{\rm (i)}. By the definition of $D_{n,k}(a,x)$, we have
\begin{align*}
&\frac{a^n}{b^n}D_{n,k}\Big(b,\frac{b^2}{a^2}x\Big)\\
=&\frac{a^n}{b^n}
\sum_{i=0}^{[\frac{n}{2} ]}\frac{n-ki}{n-i}\binom{n-i}{i}
(-1)^ib^{n-2i}\frac{b^{2i}}{a^{2i}}x^{i}\\
=&\sum_{i=0}^{[\frac{n}{2}]}\frac{n-ki}{n-i}\binom{n-i}{i}(-1)^ia^{n-2i}x^{i}\\
=&D_{n,k}(a,x)
\end{align*}
as required. Part (i) is proved.

(ii). Taking $b=1$ in part (i), we have
$$D_{n,k}(a,x)=a^nD_{n,k}\Big(1,\frac{x}{a^2}\Big).$$
It then follows from Lemma 2.1 that $D_{n,k}(a,x)$ is a PP
of ${\mathbb F_{q}}$ if and only if
$D_{n,k}(1,x)$ is a PP of ${\mathbb F_{q}}$.
This completes the proof of part (ii). So Theorem 2.2 is proved.
\end{proof}

Theorem 2.2 tells us that to study the permutational
behavior of $D_{n,k}(a,x)$ over ${\mathbb F_{q}}$,
one only needs to consider that of $D_{n,k}(1,x)$.
In the following, we supply several basic properties
on the reversed Dickson polynomial $D_{n,k}(1,x)$
of the fourth kind. The following result is given
in \cite{[HQZ]} and \cite{[HMSY]} without proof.
For its proof, one can see \cite{[CHQ]}.

\begin{lem} \cite{[HQZ]} \cite{[HMSY]}
Let $n\ge 0$ be an integer. Then we have
$$D_n(1,x(1-x))=x^n+(1-x)^n$$
and
$$E_{n}(1,x(1-x))=\frac{x^{n+1}-(1-x)^{n+1}}{2x-1}$$
if $x\ne \frac{1}{2}.$
\end{lem}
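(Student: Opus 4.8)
The plan is to prove the two identities by the standard combinatorial device of recognizing the right-hand sides as solutions of a linear recurrence and matching initial conditions, or equivalently by a direct generating-function argument. Let me set $x(1-x) = u$ and abbreviate $\alpha = x$, $\beta = 1-x$, so that $\alpha + \beta = 1$ and $\alpha\beta = u$. The key observation is that both $\alpha$ and $\beta$ are roots of the quadratic $t^2 - t + u = 0$, hence the sequences $s_n := \alpha^n + \beta^n$ and (for $\alpha \neq \beta$, i.e.\ $x \neq \frac12$) $d_n := (\alpha^{n+1} - \beta^{n+1})/(\alpha - \beta)$ both satisfy the recurrence $w_{n+1} = w_n - u\, w_{n-1}$.

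First I would handle $D_n$. One checks directly from the defining sum that $D_0(1,u) = 2 = \alpha^0 + \beta^0$ and $D_1(1,u) = 1 = \alpha + \beta$, and then that $D_n(1,u)$ satisfies the same recurrence $D_{n+1} = D_n - u D_{n-1}$; this recurrence for $D_n(x,a)$ (and its reversed form) is classical and follows by a short index-shift manipulation of the binomial sum using Pascal's rule, together with the identity $\frac{n}{n-i}\binom{n-i}{i}$. Since a second-order linear recurrence is determined by two initial values, $D_n(1,u) = s_n = x^n + (1-x)^n$ for all $n \ge 0$. For $E_n$, the same scheme applies: $E_0(1,u) = 1 = d_0$ (note $d_0 = (\alpha - \beta)/(\alpha-\beta) = 1$), $E_1(1,u) = 1 = \alpha + \beta = d_1$ (since $d_1 = (\alpha^2-\beta^2)/(\alpha-\beta) = \alpha+\beta$), and $E_n(1,u)$ obeys the recurrence $E_{n+1} = E_n - u E_{n-1}$ by the analogous binomial manipulation (this time with the simpler coefficient $\binom{n-i}{i}$). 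Hence $E_n(1,u) = d_n = \frac{x^{n+1}-(1-x)^{n+1}}{2x-1}$, the denominator being $\alpha - \beta = x - (1-x) = 2x-1$, which is nonzero precisely when $x \neq \frac12$.

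The only real point requiring care is verifying that the defining binomial sums satisfy the stated three-term recurrence; this is the routine-but-fiddly step, where one writes $D_{n+1}(1,u) - D_n(1,u) + u D_{n-1}(1,u)$, splits each binomial coefficient via Pascal's identity, re-indexes the three sums so that the powers of $u$ align, and checks that all terms cancel — the coefficient identity $\frac{n+1-ki}{n+1-i}\binom{n+1-i}{i}$ telescoping against $\frac{n-ki}{n-i}\binom{n-i}{i}$ and $\frac{n-1-k(i-1)}{n-1-(i-1)}\binom{n-1-(i-1)}{i-1}$ is what makes it work. I expect this bookkeeping to be the main obstacle, but since the identities are already stated in the literature and a proof is cited in \cite{[CHQ]}, one may also simply appeal to that reference; alternatively, a clean closed-form proof proceeds by observing that $\sum_{n\ge0} s_n z^n = \frac{2 - z}{1 - z + u z^2}$ and $\sum_{n \ge 0} d_n z^n = \frac{1}{1 - z + u z^2}$ as rational functions (from partial fractions in $\alpha, \beta$), and then checking that the generating functions $\sum_n D_n(1,u) z^n$ and $\sum_n E_n(1,u) z^n$, computed by swapping the order of summation in the double sum over $n$ and $i$, equal these same rational functions — a computation that reduces to the binomial series $\sum_{j\ge0}\binom{m+j}{j} w^j = (1-w)^{-(m+1)}$ and its variants.
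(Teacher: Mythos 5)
Your argument is correct, but note that the paper itself offers no proof of this lemma: it is stated as a quoted result from \cite{[HQZ]} and \cite{[HMSY]}, with the reader referred to \cite{[CHQ]} for a proof. So your recurrence-plus-initial-conditions argument is a genuinely self-contained alternative to the paper's citation. The structure is sound: $\alpha=x$ and $\beta=1-x$ are the roots of $t^2-t+u=0$ with $u=x(1-x)$, so $s_n=\alpha^n+\beta^n$ and $d_n=(\alpha^{n+1}-\beta^{n+1})/(\alpha-\beta)$ both satisfy $w_{n+1}=w_n-uw_{n-1}$; the initial values $s_0=2$, $s_1=1$, $d_0=d_1=1$ match $D_0,D_1,E_0,E_1$; and the "fiddly" step you flag does go through, since Pascal's rule gives $\binom{n+1-i}{i}=\binom{n-i}{i}+\binom{n-i}{i-1}$ for the second kind and $\frac{n}{n-i}\binom{n-i}{i}=\binom{n-i}{i}+\binom{n-1-i}{i-1}$ for the first kind, which is exactly the telescoping needed. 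One small slip: the coefficient $\frac{n+1-ki}{n+1-i}\binom{n+1-i}{i}$ you quote belongs to the general $(k+1)$-th kind $D_{n,k}$; for this lemma you only need the special cases $k=0$ and $k=1$, i.e.\ $\frac{n}{n-i}\binom{n-i}{i}$ and $\binom{n-i}{i}$. Your generating-function alternative ($\sum_n s_nz^n=(2-z)/(1-z+uz^2)$, $\sum_n d_nz^n=1/(1-z+uz^2)$) is also consistent with the paper's Proposition 2.6, which recovers exactly these rational functions at $k=0$ and $k=1$. What your route buys is independence from the external reference; what the paper's citation buys is brevity. Also note the denominator $2x-1$ is invertible for $x\ne\frac{1}{2}$ because the paper assumes odd characteristic, which is worth saying explicitly.
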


\begin{thm}
Each of the following is true.

{\rm (i)}. For any integer $n\ge 0$, we have
$$D_{n,k}(1,\frac{1}{4})=\frac{kn-k+2}{2^n}$$
and
$$D_{n,k}(1,x)=\frac{\big(k-1-(k-2)y\big)y^n-\big(1+(k-2)y\big)(1-y)^n}{2y-1}$$
if $x=y(1-y)\ne \frac{1}{4}$.

{\rm (ii)}. If $n_1$ and $n_2$ are positive integers such that
$n_1\equiv n_2\pmod{q^2-1}$, then one has
$D_{n_1,k}(1,x_0)=D_{n_2,k}(1,x_0)$ for any
$x_0\in {\mathbb F_{q}\setminus\{\frac{1}{4}\}}$.
\end{thm}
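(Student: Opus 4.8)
The plan is to reduce everything to the reversed Dickson polynomials of the first and second kinds. First I would record the elementary identity
$$\frac{n-ki}{n-i}=k-(k-1)\frac{n}{n-i},$$
which, multiplied by $\binom{n-i}{i}(-x)^i$ and summed over $0\le i\le[\frac n2]$, gives
$$D_{n,k}(1,x)=kE_n(1,x)-(k-1)D_n(1,x)$$
for every $n\ge 1$; the case $n=0$ is checked directly, since $k\cdot 1-(k-1)\cdot 2=2-k=D_{0,k}(1,x)$. Now write $x=y(1-y)$ with $y\ne\frac12$. Lemma 2.3 yields $D_n(1,x)=y^n+(1-y)^n$ and $E_n(1,x)=\frac{y^{n+1}-(1-y)^{n+1}}{2y-1}$, so after putting the right-hand side over the common denominator $2y-1$ it remains to collect, in the numerator, the coefficients of $y^n$ and of $(1-y)^n$. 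A short computation shows these are $(k-1)-(k-2)y$ and $-(1+(k-2)y)$ respectively, which is exactly the claimed formula. This coefficient bookkeeping is the only genuine calculation in the proof, and is the (mild) main obstacle.

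For the value at $x=\frac14$, I would use the same identity together with $D_n(1,\frac14)=\frac{2}{2^n}$ (Lemma 2.3 at $y=\frac12$) and $E_n(1,\frac14)=\frac{n+1}{2^n}$ (a standard computation, e.g.\ from the second-kind recurrence $E_n(1,x)=E_{n-1}(1,x)-xE_{n-2}(1,x)$, whose characteristic polynomial at $x=\frac14$ is $(t-\frac12)^2$, together with $E_0(1,\frac14)=E_1(1,\frac14)=1$); this yields
$$D_{n,k}\Big(1,\tfrac14\Big)=\frac{k(n+1)-2(k-1)}{2^n}=\frac{kn-k+2}{2^n}.$$
Alternatively one obtains the same value as the limit of the general formula as $y\to\frac12$, the numerator vanishing to first order there.

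For part (ii), let $x_0\in\mathbb F_q\setminus\{\frac14\}$ and factor $T^2-T+x_0\in\mathbb F_q[T]$ over the quadratic extension $\mathbb F_{q^2}$; let $y_0\in\mathbb F_{q^2}$ be a root, so $x_0=y_0(1-y_0)$ with $1-y_0\in\mathbb F_{q^2}$ as well, and $y_0\ne\frac12$ since $x_0\ne\frac14$, hence $2y_0-1\ne0$. Because $n_1,n_2\ge 1$ and $q^2-1\mid n_1-n_2$, Fermat's little theorem in $\mathbb F_{q^2}$ gives $\alpha^{n_1}=\alpha^{n_2}$ for every $\alpha\in\mathbb F_{q^2}$ (the case $\alpha=0$ being trivial as $n_1,n_2\ge1$); in particular $y_0^{n_1}=y_0^{n_2}$ and $(1-y_0)^{n_1}=(1-y_0)^{n_2}$. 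Evaluating the formula of part (i) at $y=y_0$ (legitimate since $2y_0-1\ne0$), the quantity $D_{n,k}(1,x_0)$ depends on $n$ only through $y_0^n$ and $(1-y_0)^n$, so $D_{n_1,k}(1,x_0)=D_{n_2,k}(1,x_0)$. The only points requiring care here are the passage to $\mathbb F_{q^2}$ for the root $y_0$ and the observation that excluding $x_0=\frac14$ is precisely what keeps $2y_0-1$ invertible.
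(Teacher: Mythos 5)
Your proof is correct and follows essentially the same route as the paper: the identity $\frac{n-ki}{n-i}=k-(k-1)\frac{n}{n-i}$ giving $D_{n,k}(1,x)=kE_n(1,x)-(k-1)D_n(1,x)$, combined with Lemma 2.3, is exactly the paper's derivation of the second formula, and your part (ii) (roots of $T^2-T+x_0$ in ${\mathbb F}_{q^2}$ plus $\alpha^{q^2-1}=1$) is the paper's argument verbatim, with the added care of noting the $y_0=0$ case. The only divergence is at $x=\frac14$, where the paper instead telescopes a recursion in $k$, namely $D_{n,k}(1,\frac14)=D_{n,k-1}(1,\frac14)+\frac14E_{n-2}(1,\frac14)$, down to the known value of $D_{n,1}(1,\frac14)$ from the second-kind paper; your direct evaluation via $E_n(1,\frac14)=\frac{n+1}{2^n}$ and $D_n(1,\frac14)=\frac{2}{2^n}$ is an equally valid and arguably more uniform treatment.
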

\begin{proof}
(i). First of all, it is easy to see that
$D_{0,k}\big(1,\frac{1}{4}\big)=2-k=\frac{k\times 0-k+2}{2^0}$
and
$D_{1,k}\big(1,\frac{1}{4}\big)=1=\frac{k\times 1-k+2}{2^1}$.
the first identity is true for the cases that $n=0$ and 1.
Now let $n\ge 2$. Then one has
\begin{align*}
D_{n,k}\Big(1,\frac{1}{4}\Big)&=\sum_{i=0}^{[\frac{n}{2}]}
\frac{n-ki}{n-i}\binom{n-i}{i}\Big(-\frac{1}{4}\Big)^{i}\\
&=\sum_{i=0}^{[\frac{n}{2} ]}\frac{n-(k-1)i}{n-i}\binom{n-i}{i}
\Big(-\frac{1}{4}\Big)^{i}+\sum_{i=0}^{[\frac{n}{2} ]}
\frac{-i}{n-i}\binom{n-i}{i}\Big(-\frac{1}{4}\Big)^{i}\\
&=D_{n,k-1}\Big(1,\frac{1}{4}\Big)+\frac{1}{4}\sum_{i=0}^{[\frac{n}{2} ]-1}
\binom{n-2-i}{i}\Big(-\frac{1}{4}\Big)^{i}\\
&=D_{n,k-1}\Big(1,\frac{1}{4}\Big)+\frac{1}{4}E_{n-2}\Big(1,\frac{1}{4}\Big),
\end{align*}
which follows from Theorem 2.2 (1) in \cite{[HQZ]} that
\begin{align*}
D_{n,k}\Big(1,\frac{1}{4}\Big)&=D_{n,1}\Big(1,\frac{1}{4}\Big)+(k-1)
\frac{1}{4}E_{n-2}\Big(1,\frac{1}{4}\Big)\\
&=\frac{n+1}{2^n}+\frac{(k-1)n-(k-1)}{2^n}\\
&=\frac{kn-k+2}{2^n}
\end{align*}
as desired. So the first identity is proved.

Now we turn our attention to the second identity.
Let $x\ne \frac{1}{4}$, then there exists $y\in {\mathbb F_{q^2}\setminus\{\frac{1}{2}\}}$
such that $x=y(1-y)$ . So by the definition of
the $n$-th reversed Dickson polynomial
of the $k+1$-th kind, one has
\begin{align}
D_{n,k}(1,y(1-y))&=\sum_{i=0}^{[\frac{n}{2}]}
\frac{n-ki}{n-i}\binom{n-i}{i}(-y(1-y))^{i}\nonumber\\
&=\sum_{i=0}^{[\frac{n}{2}]}
\frac{k(n-i)-2n}{n-i}\binom{n-i}{i}(-y(1-y))^{i}\nonumber\\
&=k\sum_{i=0}^{[\frac{n}{2}]}
\binom{n-i}{i}(-y(1-y))^{i}-(k-1)\sum_{i=0}^{[\frac{n}{2}]}
\frac{n}{n-i}\binom{n-i}{i}(-y(1-y))^{i}\nonumber\\
&=kE_n(1,y(1-y))-(k-1)D_n(1,y(1-y))\label{ch1}.
\end{align}
But Lemma 2.3 gives us that
\begin{align}
D_n(1,y(1-y))=y^n+(1-y)^n\label{ch3}
\end{align}
and
\begin{align}
E_n(1,y(1-y))=\sum_{i=0}^{n}y^{n-i}(1-y)^i
=\frac{x^{n+1}-(1-x)^{n+1}}{2x-1}   \label{ch4}.
\end{align}
Thus it follows from (\ref{ch1}) to (\ref{ch4}) that
\begin{align*}
D_{n,k}(1, x)=&D_{n,k}(1,y(1-y))\\
=&kE_n(1,y(1-y))-(k-1)D_n(1,y(1-y))\\
=&\frac{ky^{n+1}-k(1-y)^{n+1}}{2y-1}-(k-1)\big(y^n+(1-y)^n\big)\\
=&\frac{\big(k-1-(k-2)y\big)y^n-\big(1+(k-2)y\big)(1-y)^n}{2y-1}
\end{align*}
as required. So the second identity holds. Part (i) is proved.

(ii). For each $x_0\in {\mathbb F_{q}\setminus\{\frac{1}{4}\}}$,
one can choose an element $y_0\in {\mathbb F_{q^2}\setminus\{\frac{1}{2}\}}$
such that $x_0=y_0(1-y_0)$. Since $n_1\equiv n_2\pmod{q^2-1}$,
one has $y_0^{n_1}=y_0^{n_2}$ and $(1-y_0)^{n_1}=(1-y_0)^{n_2}$.
It then follows from part (i) that
\begin{align*}
D_{n_1,k}(1,x_0)&=D_{n_1,k}(1,y_0(1-y_0))\\
&=\frac{\big(k-1-(k-2)y_0\big)y_0^{n_1}-\big(1+(k-2)y_0\big)(1-y_0)^{n_1}}{2y_0-1}\\
&=\frac{\big(k-1-(k-2)y_0\big)y_0^{n_2}-\big(1+(k-2)y_0\big)(1-y_0)^{n_2}}{2y_0-1}\\
&=D_{n_2,k}(1,x_0)
\end{align*}
as desired. This ends the proof of Theorem 2.4.
\end{proof}

Evidently, by Theorem 2.2 (i) and Theorem 2.4 (i) one can derive
that (1.3) and (1.4) are true.

\begin{prop}
Let $n\ge 2$ be an integer. Then the recursion
$$D_{n,k}(1,x)=D_{n-1,k}(1,x)-xD_{n-2,k}(1,x)$$
holds for any $x\in {\mathbb F_{q}}$.
\end{prop}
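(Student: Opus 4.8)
The plan is to prove the recursion $D_{n,k}(1,x)=D_{n-1,k}(1,x)-xD_{n-2,k}(1,x)$ by splitting into two cases according to whether $x=\frac14$ or not, and in the generic case exploiting the closed form from Theorem 2.4 (i). First I would handle the case $x\neq\frac14$: write $x=y(1-y)$ with $y\neq\frac12$ (possibly in ${\mathbb F}_{q^2}$), so that by Theorem 2.4 (i) we have
\begin{equation*}
D_{m,k}(1,x)=\frac{\big(k-1-(k-2)y\big)y^{m}-\big(1+(k-2)y\big)(1-y)^{m}}{2y-1}
\end{equation*}
for $m=n,n-1,n-2$. The key observation is that $y$ and $1-y$ are the two roots of the quadratic $t^2-t+x=0$, equivalently each satisfies $t^m=t^{m-1}-x\,t^{m-2}$. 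Substituting this into the numerator for $m=n$ and collecting terms shows the numerator of $D_{n,k}(1,x)$ equals the numerator of $D_{n-1,k}(1,x)$ minus $x$ times the numerator of $D_{n-2,k}(1,x)$; dividing through by the common denominator $2y-1$ gives the recursion.

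Next I would treat the remaining point $x=\frac14$. Here Theorem 2.4 (i) gives the explicit value $D_{m,k}(1,\frac14)=\frac{km-k+2}{2^{m}}$, so the claimed identity reduces to the elementary check
\begin{equation*}
\frac{kn-k+2}{2^{n}}=\frac{k(n-1)-k+2}{2^{n-1}}-\frac14\cdot\frac{k(n-2)-k+2}{2^{n-2}},
\end{equation*}
which one verifies by multiplying through by $2^{n}$: the right side becomes $2\big(k(n-1)-k+2\big)-\big(k(n-2)-k+2\big)=kn-k+2$, matching the left side. This disposes of the exceptional value. An alternative, uniform route would be to work directly from the defining sum (\ref{c0}) and use the standard Pascal-type identity $\binom{n-i}{i}=\binom{n-1-i}{i}+\binom{n-1-i}{i-1}$ together with $\frac{n-ki}{n-i}$ bookkeeping, but the case split via Theorem 2.4 (i) is shorter since the hard combinatorial work has already been done there.

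The main obstacle I anticipate is purely bookkeeping: in the case $x\neq\frac14$ one must be careful that the substitution $y^m=y^{m-1}-xy^{m-2}$ (and likewise for $1-y$) is applied correctly to \emph{both} the $y^m$ and $(1-y)^m$ terms in the numerator, keeping the coefficients $k-1-(k-2)y$ and $1+(k-2)y$ untouched since they do not depend on $n$. Once that is set up, the verification is a one-line algebraic manipulation. A minor subtlety worth a remark is that the recursion is asserted for all $x\in{\mathbb F}_q$, so one should note that when $x=y(1-y)$ with $y\in{\mathbb F}_{q^2}\setminus{\mathbb F}_q$ the intermediate expressions live in ${\mathbb F}_{q^2}$ but the final identity, being an identity among elements of ${\mathbb F}_q$, still holds after specialization — exactly as in the proof of Theorem 2.4.
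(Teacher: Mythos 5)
Your proposal is correct and follows essentially the same route as the paper: both split into the cases $x\ne\frac14$ and $x=\frac14$, use the closed form of Theorem 2.4 (i) in each, and in the generic case the identity $y^{n-1}-y(1-y)\,y^{n-2}=y^n$ (your ``roots of $t^2-t+x$'' observation) is exactly the cancellation the paper performs when combining the two fractions over the common denominator $2y-1$. The $x=\frac14$ verification is the same elementary arithmetic check in both.
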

\begin{proof} We consider the following two cases.

{\sc Case 1.} $x\ne \frac{1}{4}$. For this case, one may let $x=y(1-y)$
with $y\in {\mathbb F_{q^2}}\setminus \{ \frac{1}{2}\}$.
Then by Theorem 2.4 (i), we have
\begin{align*}
&\ \ \ \ \ D_{n-1,k}(1,x)-xD_{n-2,k}(1,x)\\
&=D_{n-1,k}(1,y(1-y))-y(1-y)D_{n-2,k}(1,y(1-y))\\
&=\frac{\big(k-1-(k-2)y\big)y^{n-1}-\big(1+(k-2)y\big)(1-y)^{n-1}}{2y-1}\\
&\ \ \ \ -y(1-y)\frac{\big(k-1-(k-2)y\big)y^{n-2}-\big(1+(k-2)y\big)(1-y)^{n-2}}{2y-1}\\
&=\frac{\big(k-1-(k-2)y\big)y^{n}-\big(1+(k-2)y\big)(1-y)^{n}}{2y-1}\\
&=D_{n,k}(1,x)
\end{align*}
as required.

{\sc Case 2.} $x=\frac{1}{4}$. Then by Theorem 2.4 (i), we have
\begin{align*}
&D_{n-1,k}\Big(1,\frac{1}{4}\Big)-\frac{1}{4}D_{n-2,k}\Big(1,\frac{1}{4}\Big)\\
=&\frac{k(n-1)-k+2}{2^{n-1}}-\frac{1}{4}\frac{k(n-2)-k+2}{2^{n-2}}\\
=&\frac{kn-k+2}{2^{n}}\\
=&D_{n,k}\Big(1,\frac{1}{4}\Big).
\end{align*}

This concludes the proof of Proposition 2.5.
\end{proof}

By Proposition 2.5, we can obtain the generating function of
the reversed Dickson polynomial $D_{n,k}(1,x)$ of the $k+1$-th kind as follows.
\begin{prop}
The generating function of $D_{n,k}(1,x)$ is given by
$$\sum_{n=0}^{\infty}D_{n,k}(1,x)t^n=\frac{(k-1)t-k+2}{1-t+xt^2}.$$
\end{prop}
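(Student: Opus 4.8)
The plan is to feed the three-term recursion of Proposition 2.5 into the usual generating-function machinery. Work in the formal power series ring ${\mathbb F}_q[x][[t]]$ (so that $x$ may be treated either as an indeterminate or as a fixed element of ${\mathbb F}_q$) and set $F(t):=\sum_{n=0}^{\infty}D_{n,k}(1,x)t^n$. The first step is to record the two values that seed the recursion: the defining convention gives $D_{0,k}(1,x)=2-k$, and in the definition (\ref{c0}) only the $i=0$ term survives when $n=1$, so $D_{1,k}(1,x)=1$ (equivalently, this follows from Theorem 2.4 (i)).

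Next I would multiply the recursion $D_{n,k}(1,x)=D_{n-1,k}(1,x)-xD_{n-2,k}(1,x)$, valid for all $n\ge 2$, by $t^n$ and sum over $n\ge 2$. The left-hand side becomes $F(t)-D_{0,k}(1,x)-D_{1,k}(1,x)t=F(t)-(2-k)-t$. Shifting the summation index in the two sums on the right turns them into $t\bigl(F(t)-D_{0,k}(1,x)\bigr)=t\bigl(F(t)-(2-k)\bigr)$ and $xt^2F(t)$ respectively. Hence
\[
F(t)-(2-k)-t=t\bigl(F(t)-(2-k)\bigr)-xt^2F(t),
\]
and collecting the terms involving $F(t)$ yields $\bigl(1-t+xt^2\bigr)F(t)=(2-k)+t-(2-k)t=(k-1)t-k+2$. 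Since $1-t+xt^2$ has constant term $1$, it is a unit in ${\mathbb F}_q[x][[t]]$; dividing through gives precisely
$$\sum_{n=0}^{\infty}D_{n,k}(1,x)t^n=\frac{(k-1)t-k+2}{1-t+xt^2},$$
which is the assertion of Proposition 2.6.

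This argument is entirely formal, so I do not expect a genuine obstacle. The only points needing a little care are the bookkeeping of the $n=0$ and $n=1$ terms when the recursion is summed starting from $n=2$ --- these are exactly what generate the numerator $(k-1)t-k+2$ --- and the (standard) remark that division by $1-t+xt^2$ is legitimate in the formal power series ring because its constant term is invertible, so no question of convergence arises.
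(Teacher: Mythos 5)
Your proposal is correct and follows essentially the same route as the paper: both proofs feed the recursion of Proposition 2.5 into the standard generating-function computation, with the initial values $D_{0,k}(1,x)=2-k$ and $D_{1,k}(1,x)=1$ producing the numerator $(k-1)t-k+2$. The only cosmetic difference is that the paper expands $(1-t+xt^2)\sum_n D_{n,k}(1,x)t^n$ directly while you sum the recursion and solve for $F(t)$; the bookkeeping is identical.
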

\begin{proof}
By the recursion presented in Proposition 2.5, we have
\begin{align*}
&(1-t+xt^2)\sum_{n=0}^{\infty}D_{n,k}(1,x)t^n\\
=&\sum_{n=0}^{\infty}D_{n,k}(1,x)t^n-\sum_{n=0}
^{\infty}D_{n,k}(1,x)t^{n+1}+x\sum_{n=0}^{\infty}D_{n,k}(1,x)t^{n+2}\\
=&(k-1)t-k+2+\sum_{n=0}^{\infty}\big(D_{n+2,k}
(1,x)-D_{n+1,k}(1,x)+xD_{n,k}(1,x)\big)t^{n+2}\\
=&(k-1)t-k+2.
\end{align*}
Thus the desired result follows immediately.
\end{proof}

Now we can use Theorem 2.4 to present an explicit formula for $D_{n, k}(1, x)$
when $n$ is a power of the characteristic $p$. Then we show that $D_{n, k}(1, x)$
is not a PP of ${\mathbb F}_q$ in this case.

\begin{prop}
Let $p={\rm char}({\mathbb F_{q}})>3$ and $s$ be a positive integer. Then
$$2^{p^s}D_{p^s, k}(1,x)+k-2=k(1-4x)^{\frac{p^s-1}{2}}.$$
\end{prop}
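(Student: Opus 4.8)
The plan is to argue exactly as in Proposition 2.5, splitting into the two cases $x\ne\frac{1}{4}$ and $x=\frac{1}{4}$, and in the generic case to feed the closed form of Theorem 2.4 (i) into the Frobenius endomorphism.

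I would first handle $x\ne\frac{1}{4}$. Writing $x=y(1-y)$ with $y\in{\mathbb F}_{q^2}\setminus\{\frac{1}{2}\}$, Theorem 2.4 (i) gives
$$D_{p^s,k}(1,x)=\frac{\big(k-1-(k-2)y\big)y^{p^s}-\big(1+(k-2)y\big)(1-y)^{p^s}}{2y-1}.$$
Since ${\rm char}({\mathbb F}_{q})=p$, the Frobenius yields $(1-y)^{p^s}=1-y^{p^s}$, and when the numerator is expanded the two terms $\pm(k-2)y^{p^s+1}$ cancel, leaving
$$D_{p^s,k}(1,x)=\frac{ky^{p^s}-1-(k-2)y}{2y-1}.$$
Next I would multiply through by $2^{p^s}$, add $k-2$, and collect everything over the common denominator $2y-1$. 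Using $2^{p^s}=2$ in ${\mathbb F}_{q}$ (because $2\in{\mathbb F}_{p}$), the two terms carrying the factor $(k-2)y$ cancel and the constant terms combine to $-k$, so the numerator collapses to $2ky^{p^s}-k=k\big((2y)^{p^s}-1\big)=k(2y-1)^{p^s}$, the last step again by Frobenius. Hence
$$2^{p^s}D_{p^s,k}(1,x)+k-2=k(2y-1)^{p^s-1}.$$
To finish this case I would observe $1-4x=1-4y(1-y)=(2y-1)^2$, so that $(1-4x)^{\frac{p^s-1}{2}}=(2y-1)^{p^s-1}$, which is precisely the right-hand side.

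For the case $x=\frac{1}{4}$, I would use the first identity of Theorem 2.4 (i): $D_{p^s,k}(1,\frac{1}{4})=\frac{kp^s-k+2}{2^{p^s}}$. Since $p^s\equiv 0$ in ${\mathbb F}_{q}$, this equals $\frac{2-k}{2^{p^s}}$, whence $2^{p^s}D_{p^s,k}(1,\frac{1}{4})+k-2=(2-k)+(k-2)=0$; on the other hand $1-4\cdot\frac{1}{4}=0$ and $\frac{p^s-1}{2}\ge 2$ (as $p>3$), so $k(1-4x)^{\frac{p^s-1}{2}}=0$ as well, and the two sides agree.

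I do not expect a genuine obstacle: the proof is in essence a direct computation. The only points that need a little care are the bookkeeping of the scalar $2^{p^s}=2$ (equivalently, clearing the denominator $2y-1$ without error), the cancellation of the terms linear in $y$, and spotting the factorization $1-4x=(2y-1)^2$, which is exactly what turns the half-integer exponent $\frac{p^s-1}{2}$ into the integer exponent $p^s-1$ and lets the Frobenius do the rest.
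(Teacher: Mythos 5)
Your proof is correct and follows essentially the same route as the paper: both arguments split into the cases $x\ne\frac{1}{4}$ and $x=\frac{1}{4}$, feed the closed form of Theorem 2.4 (i) into the Frobenius endomorphism, and use the factorization $1-4x=(2y-1)^2$ to convert the exponent $\frac{p^s-1}{2}$ into $p^s-1$. The only (cosmetic) difference is that the paper substitutes $u=2y-1$ at the outset and simplifies in terms of $u$, whereas you carry out the cancellation directly in $y$ and introduce $2y-1$ only at the end.
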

\begin{proof}
We consider the following two cases.

{\sc Case 1.} $x\ne \frac{1}{4}$. For this case, putting $x=y(1-y)$ in Theorem 2.4 (i) gives us that
\begin{align*}
D_{p^s,k}(1,x)=&D_{p^s,k}(1,y(1-y))\\
=&\frac{\big(k-1-(k-2)y\big)y^{p^s}-\big(1+(k-2)y\big)(1-y)^{p^s}}{2y-1}\\
=&\frac{\frac{k+(2-k)u}{2}\big(\frac{u+1}{2}\big)^{p^s}
-\frac{k+(k-2)u}{2}\big(\frac{1-u}{2}\big)^{p^s}}{u}\\
=&\frac{1}{2^{p^s+1}u}\Big((k+(2-k)u)(u+1)^{p^s}-(k+(k-2)u)(1-u)^{p^s}\Big)\\
=&\frac{1}{2^{p^s}}(ku^{p^s-1}-k+2),
\end{align*}
where $u=2y-1$. So we obtain that
\begin{align*}
&2^{p^s}D_{p^s,k}(1,x)\\
=&k(u^2)^{\frac{p^s-1}{2}}-k+2\\
=&k\big((2y-1)^2\big)^{\frac{p^s-1}{2}}-k+2,
\end{align*}
which infers that
$$2^{p^s}D_{p^s,k}(1,x)+k-2=k(1-4x)^{\frac{p^s-1}{2}}$$
as desired.

{\sc Case 2.} $x=\frac{1}{4}$. By Theorem 2.4 (i), one has
$$2^{p^s}D_{p^s,k}\big(1,\frac{1}{4}\big)+k-2=2^{p^s}\frac{kp^s-k+2}{2^{p^s}}+k-2=0
=k(1-4\times\frac{1}{4})^{\frac{p^s-1}{2}}$$
as required.
So Proposition 2.7 is proved.
\end{proof}

It is well known that every linear polynomial over
${\mathbb F_{q}}$ is a PP of ${\mathbb F_{q}}$
and that the monomial $x^n$ is a PP of
${\mathbb F_{q}}$ if and only if $\gcd(n,q-1)=1$. Then by
Proposition 2.7, we have the following result.
\begin{cor}
Let $p>3$ be a prime and $q=p^e$. Let $e$ and $s$ be positive integers with
$s\le e$. Then $D_{p^s,k}(1,x)$ is not a PP of ${\mathbb F_{q}}$.
\end{cor}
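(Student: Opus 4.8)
The plan is to read the corollary off Proposition~2.7, which already exhibits $D_{p^s,k}(1,x)$ — up to an invertible affine change of the input and an invertible affine change of the output — as the pure monomial $x^{(p^s-1)/2}$; together with the criterion recalled just before the corollary (that $x^m$ permutes $\mathbb{F}_q$ precisely when $\gcd(m,q-1)=1$) and with Lemma~2.1, this reduces the whole statement to a short gcd computation. Before starting I would dispose of the degenerate case $k\equiv 0$ in $\mathbb{F}_q$, which (since $0\le k<p$) means $k=0$: here Proposition~2.7 collapses to $2^{p^s}D_{p^s,k}(1,x)-2=0$, so $D_{p^s,k}(1,x)$ is the constant $2^{1-p^s}$, and a constant is never a permutation of $\mathbb{F}_q$ because $q\ge p>3$. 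Hence from now on $k$ is a unit of $\mathbb{F}_q$.

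Next I would unwind Proposition~2.7. Putting $m=\frac{p^s-1}{2}$, it gives $2^{p^s}D_{p^s,k}(1,x)+k-2=k(1-4x)^{m}$, so $D_{p^s,k}(1,x)=2^{-p^s}\!\bigl(k(1-4x)^{m}-(k-2)\bigr)$. Since $p>3$ the map $x\mapsto 1-4x$ is a bijection of $\mathbb{F}_q$, $t\mapsto kt$ is a bijection because $k$ is a unit, $y\mapsto 2^{-p^s}(y-(k-2))$ is a bijection, and $2^{p^s}\ne 0$; a permutation property of a map $\mathbb{F}_q\to\mathbb{F}_q$ is unaffected by pre- or post-composition with a bijection of $\mathbb{F}_q$ (Lemma~2.1 handles the scalar factors, and translations are trivially bijective). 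Therefore $D_{p^s,k}(1,x)$ is a PP of $\mathbb{F}_q$ if and only if $t\mapsto t^{m}$ is, i.e.\ if and only if $\gcd(m,q-1)=1$.

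Finally I would verify that $\gcd(m,q-1)>1$. Because $p$ is odd, $\frac{p-1}{2}$ is an integer, and from $p^s-1=(p-1)(1+p+\cdots+p^{s-1})$ one gets $m=\frac{p-1}{2}(1+p+\cdots+p^{s-1})$, so $\frac{p-1}{2}\mid m$; likewise $\frac{p-1}{2}\mid p-1\mid p^e-1=q-1$. Since $p\ge 5$ we have $\frac{p-1}{2}\ge 2$, whence $\gcd(m,q-1)\ge\frac{p-1}{2}>1$, the monomial $t\mapsto t^m$ is not a permutation of $\mathbb{F}_q$, and therefore $D_{p^s,k}(1,x)$ is not a PP of $\mathbb{F}_q$. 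It is worth noting that the hypothesis $s\le e$ plays no role in this argument.

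I do not anticipate a genuine obstacle: the entire content of the corollary is already packed into Proposition~2.7. The only points requiring care are the bookkeeping of the three affine reductions (so that Lemma~2.1 is invoked exactly where it applies and the translations are accounted for separately), and remembering to peel off the case $k\equiv 0$ first, where the monomial's leading term vanishes and $D_{p^s,k}(1,x)$ degenerates to a constant rather than to a power map.
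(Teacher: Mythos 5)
Your proposal is correct and follows essentially the same route as the paper: invoke Proposition~2.7 to reduce the question to whether the monomial $t\mapsto t^{(p^s-1)/2}$ permutes $\mathbb{F}_q$, then observe that $\frac{p-1}{2}$ divides both $\frac{p^s-1}{2}$ and $q-1$, so the relevant gcd exceeds $1$. Your explicit treatment of the degenerate case $k\equiv 0$ (where the map collapses to a constant) is a point of care the paper's proof glosses over, but it does not change the argument in substance.
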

\begin{proof}
By Proposition 2.7, we know that $D_{p^s,k}(1,x)$
is a PP of ${\mathbb F_{q}}$ if and only if
$$(1-4x)^{\frac{p^s-1}{2}}$$
is a PP of ${\mathbb F_{q}}$ which is equivalent to
$$\gcd\Big(\frac{p^s-1}{2},q-1\Big)=1.$$
The latter one is impossible since
$\frac{p-1}{2}|\gcd\big(\frac{p^s-1}{2},q-1\big)$ implies that
$$\gcd\Big(\frac{p^s-1}{2},q-1\Big)\ge\frac{p-1}{2}>1.$$
Thus $D_{p^s,k}(1,x)$ is not a PP of ${\mathbb F_{q}}$.
\end{proof}

\begin{prop}
Let $p={\rm char}({\mathbb F_{q}})>3$ and $s$ and $l$ be integers
such that $0<s<\ell$. Then
$$D_{p^s+p^{\ell}, k}(1,x)=\frac{k}{4}\big((1-4x)^{\frac{p^s-1}{2}}+
(1-4x)^{\frac{p^{\ell}-1}{2}}\big)-\frac{k-2}{4}\big(1
+(1-4x)^{\frac{p^{s}+p^{\ell}}{2}}\big).$$
\end{prop}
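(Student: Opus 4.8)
The plan is to argue exactly as in the proof of Proposition 2.7, splitting into the cases $x\ne\frac{1}{4}$ and $x=\frac{1}{4}$. For $x\ne\frac{1}{4}$ I would write $x=y(1-y)$ with $y\in{\mathbb F}_{q^2}\setminus\{\frac{1}{2}\}$ and put $u=2y-1$, so that $y=\frac{u+1}{2}$, $1-y=\frac{1-u}{2}$ and $u^2=1-4x$. Substituting into Theorem 2.4 (i) with $n=p^s+p^{\ell}$ and simplifying the coefficients gives $k-1-(k-2)y=\frac{k-(k-2)u}{2}$ and $1+(k-2)y=\frac{k+(k-2)u}{2}$. The decisive tool is the Frobenius: since ${\rm char}({\mathbb F}_{q^2})=p$ and $2\in{\mathbb F}_p$, we have $(u\pm1)^{p^t}=u^{p^t}\pm1$ and $2^{p^t}=2$ for every $t$, whence
$$y^{p^s+p^{\ell}}=\frac{(u^{p^s}+1)(u^{p^{\ell}}+1)}{4},\qquad (1-y)^{p^s+p^{\ell}}=\frac{(1-u^{p^s})(1-u^{p^{\ell}})}{4}.$$
Plugging these into the formula of Theorem 2.4 (i) yields
$$D_{p^s+p^{\ell},k}(1,x)=\frac{1}{8u}\Big[\big(k-(k-2)u\big)(u^{p^s}+1)(u^{p^{\ell}}+1)-\big(k+(k-2)u\big)(1-u^{p^s})(1-u^{p^{\ell}})\Big].$$

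The next step is to expand the two triple products and combine the symmetric and antisymmetric parts. Writing $A=(u^{p^s}+1)(u^{p^{\ell}}+1)$ and $B=(1-u^{p^s})(1-u^{p^{\ell}})$, one has $A-B=2(u^{p^s}+u^{p^{\ell}})$ and $A+B=2(u^{p^s+p^{\ell}}+1)$, so that
$$\big(k-(k-2)u\big)A-\big(k+(k-2)u\big)B=k(A-B)-(k-2)u(A+B)=2k(u^{p^s}+u^{p^{\ell}})-2(k-2)(u^{p^s+p^{\ell}+1}+u).$$
Dividing by $8u$ and rewriting $u^{p^s-1}=(u^2)^{\frac{p^s-1}{2}}=(1-4x)^{\frac{p^s-1}{2}}$, and likewise $u^{p^{\ell}-1}=(1-4x)^{\frac{p^{\ell}-1}{2}}$ and $u^{p^s+p^{\ell}}=(u^2)^{\frac{p^s+p^{\ell}}{2}}=(1-4x)^{\frac{p^s+p^{\ell}}{2}}$ (using that $p^s+p^{\ell}$ is even), one arrives at exactly the claimed formula.

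For the remaining case $x=\frac{1}{4}$ I would use instead the first identity of Theorem 2.4 (i), namely $D_{p^s+p^{\ell},k}(1,\frac{1}{4})=\frac{k(p^s+p^{\ell})-k+2}{2^{p^s+p^{\ell}}}$. Since $p^s+p^{\ell}\equiv0\pmod p$ and $2^{p^s+p^{\ell}}=2^{p^s}\cdot2^{p^{\ell}}=4$ in ${\mathbb F}_q$, this evaluates to $\frac{2-k}{4}$; on the other hand, the right-hand side of the proposition at $x=\frac{1}{4}$ reduces to $-\frac{k-2}{4}$ because each of the exponents $\frac{p^s-1}{2},\frac{p^{\ell}-1}{2},\frac{p^s+p^{\ell}}{2}$ is positive, so every power of $1-4x=0$ occurring there vanishes, and the two sides coincide. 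The whole argument is a bookkeeping exercise; the only spot demanding attention is the correct application of the Frobenius identities $2^{p^t}=2$ and $(u\pm1)^{p^t}=u^{p^t}\pm1$ over ${\mathbb F}_{q^2}$, after which the symmetric/antisymmetric regrouping does all the work and no real obstacle remains.
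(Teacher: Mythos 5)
Your proposal is correct and follows essentially the same route as the paper's own proof: the same case split on $x=\frac14$ versus $x\ne\frac14$, the same substitution $u=2y-1$ into Theorem 2.4 (i), and the same use of the Frobenius identities $(u\pm1)^{p^t}=u^{p^t}\pm1$ and $2^{p^t}=2$ before rewriting even powers of $u$ as powers of $u^2=1-4x$. The only difference is that you spell out the symmetric/antisymmetric regrouping via $A\pm B$, a computation the paper compresses into a single displayed step.
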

\begin{proof}
We consider the following two cases.

{\sc Case 1.} $x\ne \frac{1}{4}$. For this case, putting $x=y(1-y)$ in Theorem 2.4 (i) gives us that
\begin{align*}
D_{p^s+p^{\ell},k}(1,x)=&D_{p^s+p^{\ell},k}(1,y(1-y))\\
=&\frac{\big(k-1-(k-2)y\big)y^{p^s+p^{\ell}}-\big(1+(k-2)y\big)(1-y)^{p^s+p^{\ell}}}{2y-1}\\
=&\frac{\frac{k+(2-k)u}{2}\big(\frac{u+1}{2}\big)^{p^s+p^{\ell}}
-\frac{k+(k-2)u}{2}\big(\frac{1-u}{2}\big)^{p^s+p^{\ell}}}{u}\\
=&\frac{k}{4}(u^{p^s-1}+u^{p^{\ell}-1})-\frac{k-2}{4}(1+u^{p^s+p^{\ell}})\\
=&\frac{k}{4}\big((u^2)^{\frac{p^s-1}{2}}+(u^2)^{\frac{p^{\ell}-1}{2}}\big)-\frac{k-2}{4}(1+(u^2)^{\frac{p^s+p^{\ell}}{2}}),
\end{align*}
where $u=2y-1$ and $u^2=1-4x$. So we obtain that
$$D_{n,k}(1,x)=\frac{k}{4}\big((1-4x)^{\frac{p^s-1}{2}}+
(1-4x)^{\frac{p^{\ell}-1}{2}}\big)-\frac{k-2}{4}\big(1
+(1-4x)^{\frac{p^{s}+p^{\ell}}{2}}\big)$$
as desired.

{\sc Case 2.} $x=\frac{1}{4}$. By Theorem 2.4 (i), one has
$$D_{p^s+p^{\ell},k}\big(1,\frac{1}{4}\big)=\frac{k(p^s+p^{\ell})-k+2}{2^{p^s+p^{\ell}}}=\frac{-k+2}{4}.$$
Besides,
$$\frac{k}{4}\big((1-4\times \frac{1}{4})^{\frac{p^s-1}{2}}+
(1-4\times \frac{1}{4})^{\frac{p^{\ell}-1}{2}}\big)-\frac{k-2}{4}\big(1
+(1-4\times \frac{1}{4})^{\frac{p^{s}+p^{\ell}}{2}}\big)=\frac{-k+2}{4}.$$
Thus the required result follows.
So Proposition 2.9 is proved.
\end{proof}

\begin{lem}\cite{[HMSY]}
Let $x\in{\mathbb F_{q^2}}$. Then $x(1-x)\in{\mathbb F_{q}}$
if and only if $x^q=x$ or $x^q=1-x$.
\end{lem}

Let $V$ be defined by
$$V:=\{x\in {\mathbb F_{q^2}}: x^q=1-x\}.$$
Clearly, ${\mathbb F_{q}}\cap V=\{\frac{1}{2}\}$. Then we obtain a characterization for
$D_{n,k}(1,x)$ to be a PP of ${\mathbb F_{q}}$ as follows.
\begin{thm}
Let $q=p^e$ with $p>3$ being a prime and $e$ being a positive integer. Let
$$f:y\mapsto\frac{\big(k-1-(k-2)y\big)y^{n}-\big(1+(k-2)y\big)(1-y)^{n}}{2y-1}$$
be a mapping on $({\mathbb F_{q}}\cup V)\setminus \{\frac{1}{2}\}$. Then $D_{n,k}(1,x)$ is
a PP of ${\mathbb F_{q}}$ if and only if $f$ is $2$-to-$1$ and $f(y)\ne\frac{kn-k+2}{2^n}$
for any $y\in({\mathbb F_{q}}\cup V)\setminus \{\frac{1}{2}\}$.
\end{thm}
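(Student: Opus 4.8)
The plan is to set up a bijection between the domain $({\mathbb F_q}\cup V)\setminus\{\tfrac12\}$ and a set that maps onto ${\mathbb F_q}$ via the substitution $x=y(1-y)$, and then to translate the permutation condition on $D_{n,k}(1,x)$ into a counting condition on $f$. First I would record that by Lemma 2.10 the map $y\mapsto y(1-y)$ sends ${\mathbb F_q}\cup V$ into ${\mathbb F_q}$, and conversely every $x\in{\mathbb F_q}$ arises this way: if $x\ne\tfrac14$ the equation $y(1-y)=x$ has two distinct roots $y_0,1-y_0$, and one checks (again via Lemma 2.10, i.e. $y_0^q=y_0$ or $y_0^q=1-y_0$) that both lie in ${\mathbb F_q}\cup V$; if $x=\tfrac14$ the only root is $y=\tfrac12$, which is excluded. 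Thus the $2$-to-$1$ map $\phi:y\mapsto y(1-y)$ restricts to a $2$-to-$1$ surjection from $({\mathbb F_q}\cup V)\setminus\{\tfrac12\}$ onto ${\mathbb F_q}\setminus\{\tfrac14\}$.

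Next I would relate $D_{n,k}(1,\cdot)$ to $f$. By Theorem 2.4 (i), for $x=y(1-y)\ne\tfrac14$ we have $D_{n,k}(1,x)=f(y)$, and separately $D_{n,k}(1,\tfrac14)=\tfrac{kn-k+2}{2^n}$. So the value set of $D_{n,k}(1,\cdot)$ on ${\mathbb F_q}$ equals $f\big(({\mathbb F_q}\cup V)\setminus\{\tfrac12\}\big)\cup\{\tfrac{kn-k+2}{2^n}\}$. Now $D_{n,k}(1,\cdot)$ is a PP of ${\mathbb F_q}$ iff it is injective on ${\mathbb F_q}$, i.e. iff its value set has exactly $q$ elements. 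Since $\phi$ is $2$-to-$1$ onto the $(q-1)$-element set ${\mathbb F_q}\setminus\{\tfrac14\}$, the composite $D_{n,k}(1,\cdot)\circ\phi = f$ is injective on ${\mathbb F_q}\setminus\{\tfrac14\}$ iff $f$ is $2$-to-$1$ on $({\mathbb F_q}\cup V)\setminus\{\tfrac12\}$; and injectivity of $D_{n,k}(1,\cdot)$ on all of ${\mathbb F_q}$ additionally requires that the extra value $\tfrac{kn-k+2}{2^n}$ attained at $x=\tfrac14$ be distinct from all values attained on ${\mathbb F_q}\setminus\{\tfrac14\}$, i.e. $f(y)\ne\tfrac{kn-k+2}{2^n}$ for all $y\in({\mathbb F_q}\cup V)\setminus\{\tfrac12\}$. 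Assembling these equivalences gives the claim.

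I expect the main obstacle to be the careful verification that $\phi$ is exactly $2$-to-$1$ from $({\mathbb F_q}\cup V)\setminus\{\tfrac12\}$ onto ${\mathbb F_q}\setminus\{\tfrac14\}$ — in particular that for each $x\in{\mathbb F_q}\setminus\{\tfrac14\}$, \emph{both} preimages $y_0,1-y_0$ lie in ${\mathbb F_q}\cup V$ (this is symmetric under $y\mapsto 1-y$, so it suffices to handle $y_0$, and Lemma 2.10 does precisely this), and that the two preimages are genuinely distinct (which fails only at $x=\tfrac14$, already excluded) and that ${\mathbb F_q}$ and $V$ meet only in $\tfrac12$ so no double-counting occurs. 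Once this bijective bookkeeping is pinned down, the rest is a direct cardinality argument using the Hermite-type criterion that a self-map of ${\mathbb F_q}$ is a PP iff it is injective.
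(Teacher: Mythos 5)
Your argument is correct and follows essentially the same route as the paper: both rest on Lemma 2.10 and Theorem 2.4 (i) to identify $D_{n,k}(1,\cdot)$ on ${\mathbb F}_{q}\setminus\{\frac{1}{4}\}$ with $f$ composed with the $2$-to-$1$ map $y\mapsto y(1-y)$, and treat the value $\frac{kn-k+2}{2^n}$ at $x=\frac{1}{4}$ separately. The paper merely unwinds your fiber-counting step into an explicit two-case verification of injectivity in each direction.
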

\begin{proof}
First, we show the sufficiency part. Let $f$ be $2$-to-$1$ and $f(y)\ne\frac{kn-k+2}{2^n}$
for any $y\in({\mathbb F_{q}}\cup V)\setminus \{\frac{1}{2}\}$.
Let $D_{n,k}(1,x_1)=D_{n,k}(1,x_2)$ for $x_1,x_2\in{\mathbb F_{q}}$. To show that
$D_{n, k}(1, x)$ is a PP of ${\mathbb F_{q}}$, it suffices to show that $x_1=x_2$
that will be done in what follows.

First of all, one can find $y_1,y_2\in{\mathbb F_{q^2}}$ satisfying
$x_1=y_1(1-y_1)$ and $x_2=y_2(1-y_2)$.
By Lemma 2.10, we know that $y_1,y_2\in{\mathbb F_{q}}\cup V$.
We divide the proof into the following two cases.

{\sc Case 1.} At least one of $x_1$ and $x_2$ is equal to $\frac{1}{4}$.
Without loss of any generality, we may let $x_1=\frac{1}{4}$.
So by Theorem 2.4 (i), one derives that
\begin{equation}\label{c4}
D_{n,k}(1,x_2)=D_{n,k}(1,x_1)=D_{n,3}\Big(1,\frac{1}{4}\Big)=\frac{kn-k+2}{2^n}.
\end{equation}
We claim that $x_2=\frac{1}{4}$. Assume that $x_2\ne\frac{1}{4}$.
Then $y_2\ne \frac{1}{2}$. Since $f(y)\ne\frac{kn-k+2}{2^n}$
for any $y\in({\mathbb F_{q}}\cup V)\setminus \{\frac{1}{2}\}$,
by Theorem 2.4 (i), we get that
$$D_{n,k}(1,x_2)=\frac{(k-1-(k-2)y_2)y_2^n-(1+(k-2)y_2)(1-y_2)^n}{2y_2-1}=f(y_2)\ne \frac{kn-k+2}{2^n},$$
which contradicts to (\ref{c4}). Hence the claim is true, and so we have $x_1=x_2$ as required.

{\sc Case 2.} Both of $x_1$ and $x_2$ are not equal to $\frac{1}{4}$. Then
$y_1\ne \frac{1}{2}$ and $y_2\ne \frac{1}{2}$. Since $D_{n,k}(1,x_1)=D_{n,k}(1,x_2)$,
by Theorem 2.4 (i), one has
$$\frac{(k-1-(k-2)y_1)y_1^n-(1+(k-2)y_1)(1-y_1)^n}{2y_1-1}$$
$$=\frac{(k-1-(k-2)y_2)y_2^n-(1+(k-2)y_2)(1-y_2)^n}{2y_2-1},$$
which is equivalent to $f(y_1)=f(y_2)$. However, $f$ is a
$2$-to-$1$ mapping on $({\mathbb F_{q}}\cup V)\setminus \{\frac{1}{2}\}$, and $f(y_2)=f(1-y_2)$ by
the definition of $f$. It then follows that $y_1=y_2$ or $y_1=1-y_2$. Thus $x_1=x_2$ as desired.
Hence the sufficiency part is proved.

Now we prove the necessity part. Let $D_{n,k}(1,x)$ be a PP of ${\mathbb F_{q}}$.
Choose two elements $y_1, y_2\in({\mathbb F_{q}}\cup V)\setminus \{\frac{1}{2}\}$
such that $f(y_1)=f(y_2)$, that is,
\begin{align}\label{c5}
&\frac{(k-1-(k-2)y_1)y_1^n-(1+(k-2)y_1)(1-y_1)^n}{2y_1-1}\notag\\
=&\frac{(k-1-(k-2)y_2)y_2^n-(1+(k-2)y_2)(1-y_2)^n}{2y_2-1}.
\end{align}
Since $y_1, y_2\in({\mathbb F_{q}}\cup V)\setminus \{\frac{1}{2}\}$,
it follows from Lemma 2.10 that $y_1(1-y_1)\in{\mathbb F_{q}}$ and
$y_2(1-y_2)\in{\mathbb F_{q}}$. So by Theorem 2.4 (i), (\ref{c5}) implies that
$$D_{n,k}(1,y_1(1-y_1))=D_{n,k}(1,y_2(1-y_2)).$$
Thus $y_1(1-y_1)=y_2(1-y_2)$ since $D_{n,k}(1,x)$ is a PP of ${\mathbb F_{q}}$,
which infers that $y_1=y_2$ or $y_1=1-y_2$. Since $y_2\ne \frac{1}{2}$,
one has $y_2\ne 1-y_2$. Therefore $f$ is a
$2$-to-$1$ mapping on $({\mathbb F_{q}}\cup V)\setminus \{\frac{1}{2}\}$.

Now take $y'\in ({\mathbb F_{q}}\cup V)\setminus \{\frac{1}{2}\}$.
Then from Lemma 2.10 it follows that $y'(1-y')\in{\mathbb F_{q}}$ and
$$y'(1-y')\ne \frac{1}{2}\Big(1-\frac{1}{2}\Big).$$
Notice that $D_{n,k}(1,x)$ is a PP of ${\mathbb F_{q}}$. Hence one has
$$D_{n,k}(1,y'(1-y'))\ne D_{n,k}\Big(1,\frac{1}{2}\Big(1-\frac{1}{2}\Big)\Big).$$
But Theorem 2.4 (i) tells us that
$$
D_{n,k}\Big(1,\frac{1}{2}\Big(1-\frac{1}{2}\Big)\Big)=\frac{kn-k-2}{2^n}.
$$
Then by Theorem 2.4 (i) and noting that $y'\ne \frac{1}{2}$, we have
$$\frac{\big(k-1-(k-2)y'\big)y'^{n}-\big(1+(k-2)y'\big)(1-y')^{n}}{2y'-1},$$
which infers that $f(y')\ne \frac{kn-k-2}{2^n}$ for any
$y'\in ({\mathbb F_{q}}\cup V)\setminus \{\frac{1}{2}\}$.
So the necessity part is proved.

The proof of Theorem 2.11 is complete.
\end{proof}

\section{A necessary condition for $D_{n,k}(1,x)$ to be permutational
and an auxiliary polynomial}

In this section, we study a necessary condition on $n$ for $D_{n,k}(1,x)$
to be a PP of ${\mathbb F_{q}}$. In particular, if $k=3$, then it is easy to check that
$$D_{0,k}(1,0)=2-k, D_{n,k}(1,0)=1$$
for any $n\ge 1$ and
$$ D_{0,k}(1,1)=2-k, D_{1,k}(1,1)=1, D_{n+2,k}(1,1)=D_{n+1,k}(1,1)-D_{n,k}(1,1)$$
for $n\ge 0$
, then one can easily show that the sequences
$\{D_{n,k}(1,1)|n\in \mathbb{N}\}$
are periodic with the smallest positive periods 6. In fact, one has
$$
D_{n,k}(1,1)={\left\{
  \begin{array}{rl}
2-k,& {\rm if} \ n\equiv 0\pmod{6},\\
1,& {\rm if} \ n\equiv 1\pmod{6},\\
k-1,& {\rm if} \ n\equiv 2\pmod{6},\\
k-2,& {\rm if} \ n\equiv 3\pmod{6},\\
-1,& {\rm if} \ n\equiv 4\pmod{6},\\
1-k,& {\rm if} \ n\equiv 5\pmod{6}
\end{array}
\right.}
$$
\begin{thm}
Assume that $D_{n,k}(1,x)$ is a PP of ${\mathbb F_{q}}$ with $q=p^e$ and $p>3$. Then
$n\not\equiv 1\pmod{6}$.
\end{thm}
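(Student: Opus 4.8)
The plan is to contradict injectivity of $D_{n,k}(1,x)$ by exhibiting, under the assumption $n\equiv 1\pmod 6$, two distinct points of $\mathbb{F}_q$ that carry the same value; the natural pair is $x=0$ and $x=1$. Since $n\equiv 1\pmod 6$ forces $n\ge 1$, evaluating the sum defining $D_{n,k}(1,x)$ at $x=0$ kills every term except $i=0$, so $D_{n,k}(1,0)=\frac{n}{n}\binom{n}{0}=1$. It then remains to check that $D_{n,k}(1,1)=1$ as well.

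For the value at $x=1$ I would feed $x=1$ into the recursion of Proposition 2.5: the sequence $a_m:=D_{m,k}(1,1)$ satisfies $a_{m+2}=a_{m+1}-a_m$ for $m\ge 0$, with $a_0=D_{0,k}(1,1)=2-k$ and $a_1=D_{1,k}(1,1)=1$ (both immediate from the definition of $D_{m,k}$). The characteristic polynomial of this recurrence is $t^2-t+1$, which divides $t^6-1$ in $\mathbb{Z}[t]$ and hence in $\mathbb{F}_p[t]$ for every $p$, so $a_{m+6}=a_m$ for all $m\ge 0$; equivalently, running the recursion six steps reproduces the periodic table displayed just before the theorem, and the residue $n\equiv 1\pmod 6$ selects the value $1$. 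Therefore $D_{n,k}(1,1)=a_n=a_1=1=D_{n,k}(1,0)$, and since $0\ne 1$ in $\mathbb{F}_q$ the map $x\mapsto D_{n,k}(1,x)$ is not one-to-one, contradicting the hypothesis that it is a permutation of $\mathbb{F}_q$. Hence $n\not\equiv 1\pmod 6$.

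I do not expect a genuine obstacle: the proof is a two-value comparison. The only points worth a line of care are that the argument is uniform in $n$ — including the degenerate case $n=1$, where $D_{1,k}(1,x)$ is literally the constant polynomial $1$ and the equality $D_{1,k}(1,0)=D_{1,k}(1,1)$ still holds — and that the six-periodicity of $\{a_m\}$ survives reduction modulo $p$, which it does precisely because $t^2-t+1\mid t^6-1$ over $\mathbb{Z}$, so no exceptional behaviour at $p=2,3$ intervenes (consistent with the standing assumption $p>3$).
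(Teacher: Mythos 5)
Your proof is correct and follows essentially the same route as the paper: compare the values at $x=0$ and $x=1$, note $D_{n,k}(1,0)=1$ for $n\ge 1$, and use the recursion at $x=1$ (six-periodicity of the sequence $D_{m,k}(1,1)$, which the paper tabulates just before the theorem) to conclude $D_{n,k}(1,1)=1$ when $n\equiv 1\pmod 6$, contradicting injectivity. Your write-up is in fact more complete than the paper's one-line proof, since you justify the periodicity via $t^2-t+1\mid t^6-1$ rather than merely citing the displayed table.
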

\begin{proof}
Let $D_{n,k}(1,x)$ be a PP of ${\mathbb F_{q}}$. Then $D_{n,k}(1,0)$ and $D_{n,k}(1,1)$
are distinct. Then by the above results, the desired result $n\not\equiv 1 \pmod{6}$ follows immediately.
\end{proof}

Let $n, k$ be nonnegative integers. We define the following
auxiliary polynomial $p_{n,k}(x)\in \mathbb{Z}[x]$ by
$$p_{n,k}(x):=k\sum_{j\ge 0}\binom{n}{2j+1}x^j-(k-2)\sum_{j\ge 0}\binom{n}{2j}x^j$$
for $n\ge 1$ and $p_{0,k}(x):=2^n(2-k)$.
Then we have the following relation between $D_{n,k}(1,x)$ and $p_{n,k}(x)$.
\begin{thm}
Let $p>3$ be a prime and $n\ge 0$ be an even integer. Then

{\rm (i).} One has
$$D_{n,k}(1,x)=\frac{1}{2^n}f_{n}(1-4x). \eqno(3.1)$$

{\rm (ii).} We have that $D_{n,k}(1,x)$ is a PP of ${\mathbb F_{q}}$
if and only if $p_{n,k}(x)$ is a PP of ${\mathbb F_{q}}$.
\end{thm}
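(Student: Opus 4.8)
The plan is to prove both parts by directly manipulating the closed form for $D_{n,k}(1,x)$ from Theorem 2.4 (i) together with the explicit definition of $p_{n,k}(x)$, reducing everything to a polynomial identity in $\mathbb{Z}[x]$ that can then be read modulo $p$. First I would establish part (i), which is really the heart of the matter: I claim that for even $n$ the substitution $x = y(1-y)$ (so that $1-4x = (2y-1)^2 = u^2$ with $u = 2y-1$) turns the formula
$$
D_{n,k}(1,x) = \frac{\bigl(k-1-(k-2)y\bigr)y^n-\bigl(1+(k-2)y\bigr)(1-y)^n}{2y-1}
$$
into $\frac{1}{2^n}p_{n,k}(u^2)$. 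To see this, write $y = \frac{u+1}{2}$ and $1-y = \frac{1-u}{2}$, so that $y^n + (1-y)^n = \frac{1}{2^n}\sum_{j}\binom{n}{2j}u^{2j}\cdot 2$ and $y^n - (1-y)^n = \frac{1}{2^n}\sum_{j}\binom{n}{2j+1}u^{2j+1}\cdot 2$; substituting and collecting the coefficients of the various powers of $u$ (using that $n$ is even, so the parities of the binomial sums line up), the factor $2y-1 = u$ in the denominator cancels, and one is left with exactly $\frac{1}{2^n}\bigl(k\sum_j\binom{n}{2j+1}u^{2j} - (k-2)\sum_j\binom{n}{2j}u^{2j}\bigr) = \frac{1}{2^n}p_{n,k}(u^2) = \frac{1}{2^n}p_{n,k}(1-4x)$. (Here I read the ``$f_n$'' in the displayed equation (3.1) as the polynomial $p_{n,k}$; I would correct that typo in the final version.) The case $x = \tfrac14$, i.e. $u = 0$, must be checked separately: Theorem 2.4 (i) gives $D_{n,k}(1,\tfrac14) = \frac{kn-k+2}{2^n}$, and $p_{n,k}(0) = k\binom{n}{1} - (k-2)\binom{n}{0} = kn - k + 2$, so the identity persists.

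Once (i) is in hand, part (ii) is essentially formal. The map $x \mapsto 1-4x$ is a bijection of $\mathbb{F}_q$ onto itself (since $p > 3$ means $4 \ne 0$), and $c \mapsto \frac{1}{2^n}c$ is also a bijection of $\mathbb{F}_q$ (again $2 \ne 0$). Hence $D_{n,k}(1,x) = \frac{1}{2^n}p_{n,k}(1-4x)$ is a permutation of $\mathbb{F}_q$ if and only if $p_{n,k}(x)$ is. This is precisely the content of Lemma 2.1 applied with suitable constants $c$ and $d$ — indeed $\frac{1}{2^n}p_{n,k}(1-4x)$ is obtained from $p_{n,k}(x)$ by an affine change of variable followed by scaling, so I would phrase the argument as two successive invocations of Lemma 2.1 (one for the input substitution $x \mapsto -4x$, absorbing the additive shift by $1$ into the observation that translating the argument of a polynomial by a constant does not affect the permutation property either) together with the scaling by $2^{-n}$.

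The main obstacle I anticipate is purely bookkeeping in part (i): one must be careful that the index ranges in the two sums defining $p_{n,k}$ are handled correctly (the sums run over all $j \ge 0$, but vanish once $2j+1 > n$ or $2j > n$, so they are effectively finite), and that the cancellation of $u$ in the denominator is legitimate — which is why the hypothesis that $n$ is even is used, ensuring that the ``odd part'' $y^n-(1-y)^n$ supplies exactly one factor of $u$ while the ``even part'' $y^n+(1-y)^n$ supplies none. A secondary subtlety is to confirm that after reducing the integer identity modulo $p$ the formula for $p_{n,k}(x)$ still makes sense as a polynomial over $\mathbb{F}_q$; since $p_{n,k}(x) \in \mathbb{Z}[x]$ by construction, this reduction is automatic, and no coefficient accidentally becomes ill-defined because $2^n$ never appears in the denominator of $p_{n,k}$ itself — only in the relation (3.1), where it is a unit. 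I would also double-check the edge case $n = 0$ against the stated convention $p_{0,k}(x) = 2^0(2-k) = 2-k$ and $D_{0,k}(1,x) = 2-k$, which matches $\frac{1}{2^0}p_{0,k}(1-4x) = 2-k$ trivially.
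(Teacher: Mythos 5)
Your proposal is correct and follows essentially the same route as the paper: the substitution $x=y(1-y)$, $u=2y-1$ with $y=\frac{u+1}{2}$, the binomial expansion separating even and odd powers of $u$, the separate check at $x=\frac14$ (and $n=0$), and the deduction of (ii) from Lemma 2.1 via the affine bijection $x\mapsto 1-4x$ and the unit $2^{-n}$. One small quibble: the evenness of $n$ is not what makes the cancellation of $u$ legitimate --- the antisymmetric part $(u+1)^n-(1-u)^n$ contains only odd powers of $u$ for every $n$, so the identity in fact holds for all $n$; the parity hypothesis is inherited from the theorem statement rather than used in the argument.
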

\begin{proof} (i). Clearly, (3.1) follows from the definitions of $p_{0,k}(x)$
and $D_{0,k}(1,x)$ if $n=0$. Then we assume that $n\ge 1$ in what follows.

First, let $x\in{\mathbb F_{q}}\setminus\{\frac{1}{4}\}$. Then there exists
$y\in{\mathbb F_{q^2}}\setminus\{\frac{1}{2}\}$ such that $x=y(1-y)$. Let $u=2y-1$.
It then follows from Theorem 2.4 (i) that
\begin{align*}
&D_{n,k}(1,x)\\
&=D_{n,k}(1,y(1-y))\\
&=\frac{\big(k-1-(k-2)y\big)y^{n}-\big(1+(k-2)y\big)(1-y)^{n}}{2y-1}\\
&=\frac{\frac{-(k-2)u+k}{2}\big(\frac{u+1}{2}\big)^{n}-\frac{(k-2)u+k}{2}\big(\frac{1-u}{2}\big)^{n}}{u}\\
&=\frac{1}{2^{n+1}u}\Big(k\big((u+1)^n-(1-u)^n\big)-(k-2)u\big((u+1)^n+(1-u)^n\big)\Big)\\
&=\frac{1}{2^{n}}\Bigg ( k\sum_{j\ge 0}\binom{n}{2j+1}x^j-(k-2)\sum_{j\ge 0}\binom{n}{2j}u^{2j}\Bigg)\\
&=\frac{1}{2^n}p_{n,k}(u^2)\\
&=\frac{1}{2^n}p_{n,k}(1-4y(1-y))\\
&=\frac{1}{2^n}p_{n,k}(1-4x)
\end{align*}
as desired. So (3.1) holds in this case.

Consequently, we let $x=\frac{1}{4}$. Then by Theorem 2.4 (i),
we have
$$D_{n,k}\Big(1,\frac{1}{4}\Big)=\frac{kn-k+2}{2^n}.$$
On the other hand, we can easily check that $p_{n,k}(0)=kn-k+2$. Therefore
$$D_{n,k}\Big(1,\frac{1}{4}\Big)=\frac{1}{2^n}p_{n,k}(0)
=\frac{1}{2^n}p_{n,k}(0)\Big(1-4\times \frac{1}{4}\Big)$$
as one desires. So (3.1) is proved.

(ii).
Notice that $\frac{1}{2^n}\in{\mathbb F_{q}^*}$ and $1-4x$
is linear. So $D_{n,k}(1,x)$ is a PP of ${\mathbb F_{q}}$ if and only if
$p_{n,k}(x)$ is a PP of ${\mathbb F_{q}}$. This ends the proof of Theorem 3.2.
\end{proof}

\section{The first moment $\sum_{a\in {\mathbb F_{q}}}D_{n,k}(1,a)$}

In this section, we compute the first moment
$\sum_{a\in {\mathbb F_{q}}}D_{n,k}(1,a)$.
By Proposition 2.6, one has
\begin{align}
\sum_{n=0}^{\infty}D_{n,k}(1,x)t^n&=\frac{(k-1)t-k+2}{1-t+xt^2}
=\frac{(k-1)t-k+2}{1-t}\frac{1}{1-\frac{t^2}{t-1}x} \notag\\
&=\frac{(k-1)t-k+2}{1-t}\Big(1+\sum_{m=1}^{q-1}\sum_{\ell=0}^{\infty }
\bigg(\frac{t^2}{t-1}\bigg)^{m+\ell (q-1)}x^{m+\ell (q-1)}\Big) \notag\\
&\equiv\frac{2t-1}{1-t}\Big(1+\sum_{m=1}^{q-1}\sum_{\ell=0}^{\infty }
\bigg(\frac{t^2}{t-1}\bigg)^{m+\ell (q-1)}x^{m}\Big)\pmod{x^q-x} \notag\\
&=\frac{(k-1)t-k+2}{1-t}\Big(1+\sum_{m=1}^{q-1}
\frac{(\frac{t^2}{t-1})^m}{1-(\frac{t^2}{t-1})^{q-1}}x^m\Big) \notag\\
&=\frac{(k-1)t-k+2}{1-t}\Big(1+\sum_{m=1}^{q-1}
\frac{(t-1)^{q-1-k}t^{2m}}{(t-1)^{q-1}-t^{2(q-1)}}x^m\Big).\label{c6}
\end{align}
Moreover, by Theorem 2.4 (ii), it follows that for any
$x\in{\mathbb F_{q}}\setminus\{\frac{1}{4}\}$, one has
$$D_{n_1,k}(1,x)=D_{n_2,k}(1,x)$$
when $n_1\equiv n_2\pmod{q^2-1}$. Thus if $x\ne \frac{1}{4}$, one has
\begin{align}
\sum_{n=0}^\infty D_{n,k}(1,x)t^n&=1+\sum_{n=1}^{q^2-1}
\sum_{\ell=0}^{\infty }D_{n+\ell(q^2-1),k}(1,x)t^{n+\ell(q^2-1)}\notag\\
&=1+\sum_{n=1}^{q^2-1}D_{n,k}(1,x)\sum_{\ell=0}^{\infty }t^{n+\ell(q^2-1)}\notag\\
&=1+\frac{1}{1-t^{q^2-1}}\sum_{n=1}^{q^2-1}D_{n,k}(1,x)t^{n}.\label{c7}
\end{align}
Then (\ref{c6}) together with (\ref{c7}) gives that
for any $x\ne \frac{1}{4}$, we have
\begin{align}
&\sum_{n=1}^{q^2-1}D_{n,k}(1,x)t^n\notag\\
=&\Big(\sum_{n=0}^{\infty}D_{n,k}(1,x)t^n-1\Big)(1-t^{q^2-1})\notag\\
\equiv & \Big(\frac{(k-1)t-k+2}{1-t}-1\Big)(1-t^{q^2-1})\notag\\
&+\frac{(1-t^{q^2-1})((k-1)t-k+2)}{1-t}\sum_{m=1}^{q-1}
\frac{(t-1)^{q-1-m}t^{2m}}{(t-1)^{q-1}-t^{2(q-1)}}x^m\pmod{x^q-x}\notag\\
=&\frac{(kt+1-k)(1-t^{q^2-1})}{1-t}+h(t)\sum_{m=1}^{q-1}
(t-1)^{q-1-m}t^{2m}x^m,\label{c8}
\end{align}
where

$$h(t):=\frac{(t^{q^2-1}-1)((k-1)t-k+2)}{(t-1)^q-(t-1)t^{2(q-1)}}.$$
\begin{lem}\cite{[LN]}
Let $u_0,u_1,\cdots,u_{q-1}$ be the list of the all elements of
${\mathbb F_{q}}$. Then
$$
\sum_{i=0}^{q-1}u_i^k={\left\{
  \begin{array}{rl}
0,& {\rm if} \ 0\le k\le q-2,\\
-1,& {\rm if} \ k=q-1.
\end{array}
\right.}
$$
\end{lem}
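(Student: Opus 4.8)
The plan is to split the sum over $\mathbb{F}_q$ into the contribution of $0$ and that of $\mathbb{F}_q^*$, and then to exploit the cyclic structure of $\mathbb{F}_q^*$ together with the geometric-series identity. The term coming from $u_i=0$ is $0^k$, which vanishes as soon as $k\ge 1$ and is the only thing to worry about when $k=0$; in that degenerate case the whole sum is $\sum_{i=0}^{q-1}1=q\cdot 1$, which equals $0$ in $\mathbb{F}_q$ because the characteristic $p$ divides $q=p^e$. So I would dispose of $k=0$ first and then assume $1\le k\le q-1$, working only with $\sum_{u\in\mathbb{F}_q^*}u^k$.

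Next I would fix a generator $g$ of the cyclic group $\mathbb{F}_q^*$, so that every nonzero element of $\mathbb{F}_q$ is $g^j$ for a unique $j$ with $0\le j\le q-2$; hence $\sum_{u\in\mathbb{F}_q^*}u^k=\sum_{j=0}^{q-2}(g^k)^j$. If $k=q-1$, then $g^k=1$ by Fermat, so this sum is $(q-1)\cdot 1=-1$ in $\mathbb{F}_q$, which is the second case of Lemma 4.1. If instead $1\le k\le q-2$, then $g^k\ne 1$ since $g$ has order exactly $q-1>k$, so the geometric-series formula gives $\sum_{j=0}^{q-2}(g^k)^j=\big((g^k)^{q-1}-1\big)\big/\big(g^k-1\big)$, and $(g^k)^{q-1}=(g^{q-1})^k=1$ makes the numerator zero; this is the first case.

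The only input requiring any care is the existence of the generator $g$, i.e.\ the fact that the multiplicative group of a finite field is cyclic; I would simply cite this standard result (it appears in \cite{[LN]}). A minor bookkeeping matter is to separate the case $k=0$ before introducing $g$, so that for $k\ge 1$ the vanishing of the $0^k$ term is unambiguous regardless of any convention one adopts for $0^0$. Beyond that there is no obstacle: the statement is classical, and the two short computations above are precisely the argument intended by the citation.
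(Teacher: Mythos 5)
Your proof is correct: the separation of the $u=0$ term (with the convention $0^0=1$, which is the right one for this lemma's use in the Hermite criterion), the reduction to a geometric sum over powers of a generator of the cyclic group $\mathbb{F}_q^*$, and the two cases $g^k\ne 1$ versus $g^k=1$ are exactly the standard argument. The paper gives no proof of this lemma, citing \cite{[LN]} instead, and your argument is precisely the one found in that reference, so there is nothing to compare beyond noting the agreement.
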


Now by Theorem 2.4 (i), Lemma 4.1 and (\ref{c8}), we derive that
\begin{align}
&\sum_{n=1}^{q^2-1}\sum_{a\in{\mathbb F_{q}}}D_{n,k}(1,a)t^n\notag\\
=&\sum_{n=1}^{q^2-1}D_{n,k}\Big(1,\frac{1}{4}\Big)t^n+\sum_{n=1}^{q^2-1}
\sum_{a\in{\mathbb F_{q}}\setminus\{\frac{1}{4}\}}D_{n,k}(1,a)t^n\notag\\
=&\sum_{n=1}^{q^2-1}\frac{kn-k+2}{2^n}t^n+\sum_{a\in{\mathbb F_{q}}
\setminus\{\frac{1}{4}\}}\frac{(kt+1-k)(1-t^{q^2-1})}{1-t}+h(t)\sum_{m=1}
^{q-1}(t-1)^{q-1-m}t^{2m}\sum_{a\in{\mathbb F_{q}}\setminus\{\frac{1}{4}\}}a^m\notag\\
=&\sum_{n=1}^{q^2-1}\frac{kn-k+2}{2^n}t^n+(q-1)\frac{(kt+1-k)(1-t^{q^2-1})}{1-t}
+h(t)\sum_{m=1}^{q-1}(t-1)^{q-1-m}t^{2m}\sum_{a\in{\mathbb F_{q}}}a^m\notag\\
&\ \ \ -h(t)\sum_{m=1}^{q-1}(t-1)^{q-1-m}t^{2m}\Big(\frac{1}{4}\Big)^m\notag\\
=&\sum_{n=1}^{q^2-1}\frac{kn-k+2}{2^n}t^n-\frac{(kt+1-k)(1-t^{q^2-1})}{1-t}-h(t)t^{2(q-1)}
-h(t)\sum_{m=1}^{q-1}(t-1)^{q-1-m}t^{2m}\Big(\frac{1}{4}\Big)^m.\label{c9}
\end{align}

Since $(t-1)^q=t^q-1$ and $q$ is odd, one has
\begin{align}
h(t)=&\frac{(t^{q^2-1}-1)(2t-1)}{(t-1)^q-(t-1)t^{2(q-1)}}\notag\\
=&\frac{(t^{q^2-1}-1)(2t-1)}{(1-t^{q-1})(t^q-t^{q-1}-1)} \notag\\
=&\frac{(t^{q^2}-t)(2t-1)}{(t-t^{q})(t^q-t^{q-1}-1)}\notag\\
=&\frac{(t^{q}-t)^q+t^q-t}{t-t^{q}}\cdot \frac{2t-1}{t^q-t^{q-1}-1}\notag\\
=&\frac{(-1-(t-t^q)^{q-1})(2t-1)}{t^q-t^{q-1}-1}\notag\notag\\
=&\frac{(2t-1)\sum_{i=0}^{q^2-q}b_it^i}{t^q-t^{q-1}-1},\label{c10}
\end{align}
where
$$\sum_{i=0}^{q^2-q}b_it^i:=-1-(t-t^q)^{q-1}.$$
Then by the binomial theorem applied to $(t-t^q)^{q-1}$,
we can derive the following expression for the coefficient $b_i$.
\begin{prop}
For each integer $i$ with $0\le i\le q^2-q$, write $i=\alpha+\beta q$
with $\alpha $ and $\beta$ being integers such that
$0\le \alpha,\beta \le q-1$. Then
$$b_i={\left\{
\begin{array}{ll}
(-1)^{\beta +1}\binom{q-1}{\beta},& {\it if} \ \alpha +\beta =q-1,\\
-1,& {\it if} \ \alpha =\beta =0,\\
0,& {\it otherwise}.
\end{array}
\right.}$$
\end{prop}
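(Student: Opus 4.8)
The plan is to compute $-1-(t-t^q)^{q-1}$ directly by the binomial theorem and then read off the digits of each exponent. First I would write $(t-t^q)^{q-1}=t^{q-1}(1-t^{q-1})^{q-1}$ and expand
$$(1-t^{q-1})^{q-1}=\sum_{j=0}^{q-1}(-1)^j\binom{q-1}{j}t^{(q-1)j},$$
so that
$$(t-t^q)^{q-1}=\sum_{j=0}^{q-1}(-1)^j\binom{q-1}{j}t^{(q-1)(j+1)}.$$
The exponents $(q-1)(j+1)$ for $0\le j\le q-1$ are the distinct integers $q-1,2(q-1),\dots,q(q-1)=q^2-q$, so the series $\sum_{i=0}^{q^2-q}b_it^i=-1-(t-t^q)^{q-1}$ has at most $q+1$ nonzero coefficients: the constant term and the coefficients of $t^{(q-1)m}$ for $1\le m\le q$. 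In particular $b_i=0$ for every $i$ not of one of these shapes, which is the ``otherwise'' case.

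Next I would pin down the constant term. Every exponent $(q-1)(j+1)$ occurring above is at least $q-1\ge 1$, hence $>0$, so the only contribution to $t^0$ comes from the stand-alone $-1$; thus $b_0=-1$. Since $q>1$, the pair $(\alpha,\beta)=(0,0)$ does not satisfy $\alpha+\beta=q-1$, so this is precisely the second case of the statement and it does not overlap the first.

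Finally, for $1\le m\le q$ I would determine the base-$q$ digits of $i=(q-1)m$ (recall that for $0\le i\le q^2-q\le q^2-1$ the decomposition $i=\alpha+\beta q$ with $0\le\alpha,\beta\le q-1$ is just the base-$q$ expansion). If $1\le m\le q-1$, then $(q-1)m=q(m-1)+(q-m)$ with $0\le q-m\le q-1$ and $0\le m-1\le q-1$, so $\alpha=q-m$, $\beta=m-1$ and $\alpha+\beta=q-1$; if $m=q$, then $(q-1)q=q(q-1)+0$, so $\alpha=0$, $\beta=q-1$, again $\alpha+\beta=q-1$. In every case the index $j$ with $(q-1)(j+1)=i$ equals $m-1=\beta$, hence the coefficient of $t^i$ in $-1-(t-t^q)^{q-1}$ is $-(-1)^{\beta}\binom{q-1}{\beta}=(-1)^{\beta+1}\binom{q-1}{\beta}$, which is the first case.

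The computation is routine; the only point needing a little care is the bookkeeping that turns the exponent $(q-1)(j+1)$ into the digit pair $(\alpha,\beta)$, together with the observation that the stand-alone term $-1$ is not absorbed into any binomial term, so that the three cases are genuinely disjoint and exhaustive.
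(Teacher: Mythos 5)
Your argument is correct and is essentially the paper's own (the paper merely invokes ``the binomial theorem applied to $(t-t^q)^{q-1}$'' without writing out the details that you supply). Factoring out $t^{q-1}$ before expanding is only a cosmetic variation on expanding $(t-t^q)^{q-1}=\sum_{j}(-1)^j\binom{q-1}{j}t^{(q-1-j)+qj}$ directly, which reads off $(\alpha,\beta)=(q-1-j,j)$ at once; your bookkeeping of the digits and of the disjointness of the three cases is accurate.
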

For convenience, let
$$a_n:=\sum_{a\in{\mathbb F_{q}}}D_{n,k}(1,a).$$
Then by (\ref{c9}) and (\ref{c10}), we arrive at
\begin{align}
&\sum_{n=1}^{q^2-1}\Big(a_n-\frac{kn-k+2}{2^n}\Big)t^n\\
=&-\frac{(kt+1-k)(1-t^{q^2-1})}{1-t}-\frac{(2t-1)\sum_{i=0}^{q^2-q}b_it^i}{t^q-t^{q-1}-1}
\Big(t^{2(q-1)}+\sum_{m=1}^{q-1}(t-1)^{q-1-m}t^{2m}\Big(\frac{1}{4}\Big)^m\Big),
\end{align}
which implies that
\begin{align}
&(t^q-t^{q-1}-1)\sum_{n=1}^{q^2-1}\Big(a_n-\frac{3n-1}{2^n}\Big)t^n\notag\\
=&-(t^q-t^{q-1}-1)(kt+1-k)\sum_{i=0}^{q^2-2}t^i-(2t-1)\Big(t^{2(q-1)}+\sum_{k=1}^{q-1}
(t-1)^{q-1-k}t^{2k}\Big(\frac{1}{4}\Big)^k\Big)\sum_{i=0}^{q^2-q}b_it^i. \label{c11}
\end{align}

Let
$$\sum_{i=1}^{q^2+q-1}c_it^i$$
denote the right-hand side of (\ref{c11}) and let
$$d_n:=a_n-\frac{kn-k+2}{2^n}$$
for each integer $n$ with $1\le n\le q^2-1$. Then (\ref{c11}) can be reduced to
\begin{equation}\label{c12}
(t^q-t^{q-1}-1)\sum_{n=1}^{q^2-1}d_nt^n=\sum_{i=1}^{q^2+q-1}c_it^i.
\end{equation}
Then by comparing the coefficient of $t^i$ with $1\le i\le q^2+q-1$
of the both sides in (\ref{c12}), we derive the following relations:
$${\left\{
  \begin{array}{ll}
c_j=-d_j,& {\rm if} \ 1\le j\le q-1,\\
c_q=-d_1-d_q,& \\
c_{q+j}=d_j-d_{j+1}-d_{q+j},& {\rm if} \ 1\le j\le q^2-q-1,\\
c_{q^2+j}=d_{q^2-q+j}-d_{q^2-q+j+1},& {\rm if} \ 0\le j\le q-2,\\
c_{q^2+q-1}=d_{q^2-1},&
\end{array}
\right.}$$
from which we can deduce that
\begin{align}\label{c13}
{\left\{
  \begin{array}{ll}
d_j=-c_j,& {\rm if} \ 1\le j\le q-1,\\
d_q=c_1-c_q,& \\
d_{\ell q+j}=d_{(\ell -1)q+j}-d_{(\ell-1)q+j+1}-c_{\ell q+j},
& {\rm if} \ 1\le \ell\le q-2\ \ {\rm and}\ \ 1\le j\le q-1,\\
d_{\ell q}=d_{(\ell -1)q}-d_{(\ell-1)q+1}-c_{\ell q},& {\rm if} \ 2\le \ell\le q-2,\\
d_{q^2-q+j}=\sum_{i=j}^{q-1}c_{q^2+i},& {\rm if} \ 0\le j\le q-1.
\end{array}
\right.}
\end{align}

Finally, (\ref{c13}) together with the following identity
$$\sum_{a\in {\mathbb F_{q}}}D_{n,k}(1,a)=d_n+\frac{kn-k+2}{2^n}$$
shows that the last main result of this paper is true:
\begin{thm}
Let $c_i$ be the coefficient of $t^i$ in the right-hand side of (\ref{c11})
with $i$ being an integer such that $1\le i\le q^2+q-1$. Then we have
\begin{align*}
&\sum_{a\in {\mathbb F_{q}}}D_{j,k}(1,a)=-c_j+\frac{kj-k+2}{2^j}\ \ if\ \ 1\le j\le q-1,\\
&\sum_{a\in {\mathbb F_{q}}}D_{q,k}(1,a)=c_1-c_q-\frac{k-2}{2},\\
&\sum_{a\in {\mathbb F_{q}}}D_{\ell q+j,k}(1,a)=\sum_{a\in {\mathbb F_{q}}}D_{(\ell-1)q+j,k}(1,a)-
\sum_{a\in {\mathbb F_{q}}}D_{(\ell-1)q+j+1,k}(1,a)-c_{\ell q+j}+\frac{k}{2^{\ell +j}}\\
&\ \ \ \ if\ \ 1\le \ell\le q-2 \ {\it and} \ 1\le j\le q-1,\\
&\sum_{a\in {\mathbb F_{q}}}D_{\ell q,k}(1,a)=\sum_{a\in {\mathbb F_{q}}}D_{(\ell-1)q,k}(1,a)-
\sum_{a\in {\mathbb F_{q}}}D_{(\ell-1)q+1,k}(1,a)-c_{\ell q}+\frac{k}{2^{\ell}}\ \ if\ \ 2\le \ell\le q-2
\end{align*}
and
\begin{align*}
&\sum_{a\in {\mathbb F_{q}}}D_{q^2-q+j,k}(1,a)
=\sum_{i=j}^{q-1}c_{q^2+i}+\frac{kj-k+2}{2^j}\ \ if\ \ 0\le j\le q-1.
\end{align*}
\end{thm}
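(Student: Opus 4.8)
The plan is to read the five formulas straight off the relations (\ref{c13}), which have already been obtained above by comparing the coefficients of $t^i$ on both sides of (\ref{c12}) and inverting the resulting triangular recursion. Everything substantial is in place by that point: the generating-function computation (\ref{c6})--(\ref{c9}) exhibits $\sum_{n=1}^{q^2-1}\big(a_n-\frac{kn-k+2}{2^n}\big)t^n$ as an explicit rational function of $t$, the coefficients $b_i$ are pinned down via the binomial expansion of $(t-t^q)^{q-1}$, and (\ref{c12}) together with (\ref{c13}) follows. So the proof of the theorem amounts to nothing more than substituting $d_m=a_m-\frac{km-k+2}{2^m}$ into each line of (\ref{c13}) and simplifying the surviving constants inside $\mathbb{F}_q$.

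The simplifications use two elementary facts, valid since $q=p^e$ with $p>3$. First, $2$ lies in the prime field, so $2\neq0$ and $2^q=2$ in $\mathbb{F}_q$; hence $2^{q^2}=2$, and more generally $2^{\ell q+j}=2^{\ell+j}$ and $2^{q^2-q+j}=2^{j}$. Second, $q\equiv0\pmod{p}$, so $k\ell q\equiv0$ and $kq^2\equiv0$ in $\mathbb{F}_q$. Putting these together gives, in $\mathbb{F}_q$,
\begin{gather*}
\frac{kq-k+2}{2^{q}}=-\frac{k-2}{2},\qquad
\frac{k\ell q-k+2}{2^{\ell q}}=\frac{-k+2}{2^{\ell}},\\
\frac{k(\ell q+j)-k+2}{2^{\ell q+j}}=\frac{kj-k+2}{2^{\ell+j}},\qquad
\frac{k(q^2-q+j)-k+2}{2^{q^2-q+j}}=\frac{kj-k+2}{2^{j}},
\end{gather*}
and likewise the reductions of $\frac{k((\ell-1)q+j)-k+2}{2^{(\ell-1)q+j}}$ and $\frac{k((\ell-1)q+j+1)-k+2}{2^{(\ell-1)q+j+1}}$ to $\frac{kj-k+2}{2^{\ell-1+j}}$ and $\frac{kj+2}{2^{\ell+j}}$.

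Granting these, the five assertions come out line by line. For $1\le j\le q-1$: $a_j=d_j+\frac{kj-k+2}{2^j}=-c_j+\frac{kj-k+2}{2^j}$. For $j=q$: $a_q=d_q+\frac{kq-k+2}{2^q}=c_1-c_q-\frac{k-2}{2}$. For the recursive line with $1\le\ell\le q-2$, $1\le j\le q-1$, replace each of $d_{\ell q+j},d_{(\ell-1)q+j},d_{(\ell-1)q+j+1}$ by the matching $a$-term; after the reductions above, the three leftover constants, put over the common denominator $2^{\ell+j}$, add to $\frac{k}{2^{\ell+j}}$ because $(kj-k+2)-2(kj-k+2)+(kj+2)=k$. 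The same identity at $j=0$ handles the line for $d_{\ell q}$, $2\le\ell\le q-2$, producing the constant $\frac{k}{2^{\ell}}$. Finally, for $0\le j\le q-1$, the last line of (\ref{c13}) with the reduction of $\frac{k(q^2-q+j)-k+2}{2^{q^2-q+j}}$ gives $a_{q^2-q+j}=\sum_{i=j}^{q-1}c_{q^2+i}+\frac{kj-k+2}{2^j}$. Recalling $a_n=d_n+\frac{kn-k+2}{2^n}$, these are precisely the displayed formulas.

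I do not expect a genuine obstacle; what requires attention is purely bookkeeping. One must line up the index ranges $1\le j\le q-1$, $j=q$, $1\le\ell\le q-2$, $2\le\ell\le q-2$, $0\le j\le q-1$ in (\ref{c13}) with the blocks of exponents $1,\dots,q-1$, then $q$, then $q+1,\dots,q^2-1$, then $q^2,\dots,q^2+q-1$ arising from the coefficient comparison in (\ref{c12}) (with a little extra care at $j=q-1$, where $d_{(\ell-1)q+j+1}$ is a multiple of $q$ and is governed by a different line), and one must apply the $\mathbb{F}_q$-reductions of the powers $2^m$ and numerators $km-k+2$ consistently throughout. The one mildly delicate verification is the telescoping hidden in the two recursive lines, namely that the accumulated constants are exactly $\frac{k}{2^{\ell+j}}$ and $\frac{k}{2^{\ell}}$ and carry no residual index dependence; once that identity is checked, the theorem follows.
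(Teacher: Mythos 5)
Your proposal is correct and follows exactly the paper's own route: the paper likewise derives the theorem by substituting $d_n=a_n-\frac{kn-k+2}{2^n}$ into the relations (\ref{c13}) obtained from comparing coefficients in (\ref{c12}), and your $\mathbb{F}_q$-reductions ($2^q=2$, $kq\equiv 0$, and the resulting constant $\frac{k}{2^{\ell+j}}$) are the arithmetic the paper leaves implicit. No gap.
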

  
\begin{center}
{\bf Acknowledgment}
\end{center}
The author would like to thank Professor Shaofang Hong for pointing out
to him in early of this April that the arguments for the second kind case
and for the fourth kind case presented in \cite{[HQZ]} and \cite{[CHQ]},
respectively, work also for the general $(k+1)$-th kind case.

\end{document}